\definecolor{dkblue}{RGB}{30,90,140} % This is a dark Blue     
\definecolor{mydarkbluett}{RGB}{12,111,174}
\theoremstyle{plain}
\newtheorem{thm}{Theorem}
\newtheorem{remark}[thm]{Remark}
\newtheorem{lemma}[thm]{Lemma}
\numberwithin{equation}{section}
\numberwithin{thm}{section}
\newcommand{\pa}{\partial_{\alpha}}
\newcommand{\minspace}{\hspace{0.05cm}}
\newcommand{\mminspace}{\hspace{0.02cm}}
\newcommand{\R}{\mathbb R}
\newcommand{\mumin}{\mu^{m}}
\newcommand{\mumax}{\mu^{M}}
\newcommand{\muin}{\mu^{in}}
\newcommand{\muout}{\mu^{out}}
\newcommand{\rhomin}{\rho^{m}}
\newcommand{\mum}{\overline{\mu}}
\newcommand{\intrtwo}{\int_{\mathbb{R}^2}}
\newcommand{\pai}{\partial_i}
\newcommand{\pj}{\partial_j}
\newcommand{\pk}{\partial_k}
\newcommand{\pam}{\partial_m}
\newcommand{\pone}{\partial_1}
\newcommand{\ptwo}{\partial_2}
\DeclareMathOperator{\Lip}{Lip}
\begin{document}

\title[Global regularity of 2D N-S free boundary with small viscosity contrast]{Global regularity of 2D Navier-Stokes free\\ boundary with small viscosity contrast}

\author[F. Gancedo]{Francisco Gancedo$^\dagger$}
\address{$^\dagger$Departamento de An\'{a}lisis Matem\'{a}tico $\&$ IMUS, Universidad de Sevilla, C/ Tarfia s/n, Campus Reina Mercedes, 41012 Sevilla, Spain.  \href{mailto:fgancedo@us.es}{fgancedo@us.es}}

% \address{$^\ddagger$Department of Mathematics, University of Pennsylvania, Philadelphia, PA 19104, USA.  \href{mailto:edugar@math.upenn.edu}{edugar@math.upenn.edu}}
\author[E. Garc\'ia-Ju\'arez]{Eduardo Garc\'ia-Ju\'arez$^{\ddagger}$}
\address{$^{\ddagger}$Departament de Matemàtiques i Informàtica, Universitat de Barcelona, Gran Via de les Corts Catalanes, 585 08007, Barcelona, Spain. \href{mailto:egarciajuarez@ub.edu}{egarciajuarez@ub.edu}}

%\date{\today}
%\date{\today; \Red{(DRAFT)}}
% \date{May 8, 2019}
% \date{}

\begin{abstract}
This paper studies the dynamics of two incompressible immiscible fluids in 2D modeled by the inhomogeneous Navier-Stokes equations. We prove that if initially the viscosity contrast is small then there is global-in-time regularity. This result has been proved recently in \cite{PaicuZhang2020} for $H^{5/2}$ Sobolev regularity  of the interface. Here we provide a new approach which allows to obtain preservation of the natural $C^{1+\gamma}$ H\"older regularity of the interface for all $0<\gamma<1$. Our proof is direct and allows for low Sobolev regularity of the initial velocity without any extra technicality. It uses new quantitative harmonic analysis bounds for $C^{\gamma}$ norms of even singular integral operators on characteristic functions of $C^{1+\gamma}$ domains \cite{GancedoG-J2021}.

\end{abstract}

% set the depth for the table of contents (0-2)
\setcounter{tocdepth}{1}
%\chapter is level 0
%\section is level 1
%\subsection is level 2
%\subsubsection is level 3
%\paragraph is level 4
%\subparagraph is level 5

\maketitle
%\tableofcontents

\section{Introduction}

In this paper we consider incompressible flows in the whole space $\R^2$,
\begin{equation}\label{incom}
\nabla\cdot u=0,
\end{equation}
of inhomogeneous fluids
\begin{equation}\label{mass}
\partial_t\rho+u\cdot\nabla \rho=0,   
\end{equation}
driven by Navier-Stokes equations
\begin{equation}
\label{NavierStokes}
\rho D_t u =\nabla\cdot\left(\mu\mathbb{D}u - \mathbb{I}_2P\right).
\end{equation}
Above $u$, $\rho$, $\mu$ and $P$ are the velocity field, the density, viscosity and pressure of the fluids. The operator  $D_t$ is the total derivative
\begin{equation*}
    D_t u=\partial_t u +u\cdot\nabla u,
\end{equation*}
the tensor $\mathbb{D}u$ denotes the symmetric part of the gradient
$$\mathbb{D}u=\nabla u +\nabla u^*,\quad \mathbb{D}_{ij}u=\pai u_j+\pj u_i,$$ and $\mathbb{I}_2$ is the identity matrix in $\mathbb{R}^2$. The viscosity depends smoothly on the density, $\mu=\tilde{\mu}(\rho)$ with $\tilde{\mu}$ smooth, so that in particular it is also preserved along trajectories
\begin{equation}\label{viscosity}
\partial_t\mu+u\cdot\nabla \mu=0.
\end{equation}
We deal with a moving fluid occupying a bounded domain $D(t)\subset\R^2$ and a second fluid occupying the complement of it $D(t)^c=\R^2\smallsetminus\overline{D(t)}$. They evolve with the velocity field through the particle trajectories
\begin{equation}\label{particle_trajectories}
\left\{\begin{aligned}
\frac{dX}{dt}(y,t)&=u(X(y,t),t),\\
X(y,0)&=y.
\end{aligned}\right.
\end{equation}
The fluids are immiscible, having different characteristics, principally different densities and viscosities, so that \begin{equation}\label{jump}
(u,\rho,\mu,P)(x,t)=\left\{\begin{array}{ll}
		(u^{in},\rho^{in},\mu^{in},P^{in})(x,t),& x\in D(t),\\
		(u^{out},\rho^{out},\mu^{out},P^{out})(x,t),& x\in D(t)^c=\R^2\smallsetminus\overline{D(t)}.
	\end{array}\right. \\
\end{equation}

A main interest is the dynamics and the regularity of the common boundary between the fluids $\partial D(t)$. The system is assumed to have initial finite kinetic energy
$$
\int_{\R^2}\rho(x,0)|u(x,0)|^2dx<\infty,
$$      
providing the physically relevant scenario. The classical free boundary physical conditions without capillarity \cite{Denisova2001}
\begin{equation}\label{BC1}
[\![u]\!]=0 \text{ on } \partial D(t),
\end{equation}
\begin{equation}\label{BC2}
[\![\mu\mathbb{D}u-\mathbb{I}_2P]\!]n=0\text{ on } \partial D(t),
\end{equation}
are recovered by considering the equations \eqref{incom}-\eqref{viscosity} in a weak sense, together with the regularity obtained for the solution \cite{GancedoG-J2018}.

\subsection{Previous Results.} Free boundary Navier-Stokes problems have a long history in mathematical science. The one-fluid case (vacuum-fluid interaction where $\mu^{out}=0=\rho^{out}$) was first considered, where global-in-time existence with gravity for near planar initial data was proved. Recently, low regularity results for inhomogeneous Navier-Stokes equations in the whole space have given new approaches for the two-fluid case (fluid-fluid interaction). They consider $\mu>0$, giving global regularity for different scenarios. We describe first the classical vacuum-fluid case and later the fluid-fluid interaction.  

The first study of the free boundary Navier-Stokes equations goes back to \cite{Solonnikov1977} where fluid-vacuum interaction was studied for closed contours with no gravity ($g=0$) using H\"older spaces with the appropriate parabolic scale \cite{Solonnikov1990,Solonnikov1991}. Local well-posedness in Sobolev spaces was given next for the horizontally-flat geometry where the fluid lies essentially on top of a fixed bottom with non-slip boundary condition and below vacuum \cite{Beale1981}.  See \cite{Abels2005} for similar results in $L^p$ Sobolev spaces.

The long time behavior of solutions was studied in \cite{Beale1981}, giving existence up to time $T$ depending on the size of the initial, near horizontally flat, data. The first global-in-time existence result for small initial data was given for the surface tension case \cite{Beale1983}. This result was extended to the case without surface tension \cite{Sylvester1990,TaniTanaka1995}. 
After those results, sharp decay rates of the solution were given in the case of surface tension for asymptotically-flat \cite{BealeNishida1985} and horizontally-periodic geometries \cite{NishidaTeramotoYoshihara2004}. More recently, the results were extended with different approaches without the help of surface tension for both geometries \cite{GuoTice2013-2,GuoTice2013}. See the recent paper \cite{DHMT2020} where global well-posedness is shown for this free boundary value problem with the initial domain the half-space and the initial velocity small with respect to a scaling invariant norm.
Contrarily, large size initial data produces finite-time singularities. Navier-Stokes free boundary blows-up in finite time for the 2D vacuum-fluid interaction case \cite{CCFGG-S2019}. 
The result considers closed contours producing splash singularities (particle collision on the evolving boundary) in finite time. See \cite{CoutandShkoller2019} for the extension of the blow-up to the 3D case. 
%This scenario is given when two different particles on the moving boundary collide at a single physical point meanwhile its regularity is preserved. At time of blow-up the incompressibility condition is not satisfied due to the fact that there exist at least two different labels which represent the same physical point in the space $\R^{2}$. It yielded the first finite time singularity formation for incompressible Navier-Stokes equations. The same result has been extended to the 3D case in \cite{CoutandShkoller2019}.

The techniques in \cite{Solonnikov1977} were extended to the case of two fluids to study the global-in-time well-posedness of the problems for small initial velocity \cite{Denisova2008}. See \cite{SSZ2020} where the low regularity case is considered. In \cite{WTK2014}, decay estimates are obtained for the the internal waves case with gravity.

A different approach to study the interface evolution between immiscible fluids is to use inhomogeneous Navier-Stokes for low regular solutions. Parabolicity can be exploited to gain enough regularity for the velocity in the two-fluid case even when the functions defining the fluid properties are given as in \eqref{jump} . The approaches with no viscosity jump ($\mu=1$) are explained first. In two dimensions, there is global regularity for the system (\ref{incom}-\ref{mass}-\ref{NavierStokes}) for general smooth positive initial density \cite{LadyzenskajaSolonnikov1975}. In the three dimensional case, global regularity for large initial data is open as it contains Navier-Stokes as a particular case \cite{Fefferman2006}. If $0\leq \rho(x,0)\in L^\infty$ is allowed and  $\sqrt{\rho(x,0)}u(x,0)\in L^2(\R^d)$, $d=2,3$, there exist global in time weak solutions satisfying
$$
\int\rho(x,t)|u(x,t)|^2dx+2\int_0^t\int|\nabla u(x,s)|^2dxds\leq \int\rho(x,0)|u(x,0)|^2dx,
$$
with $\rho\in L^\infty((0,T)\times\R^d)$, $\rho u\in L^\infty(0,T;L^2(\R^d))$ and $u\in L^2(0,T;\dot{H}^1(\R^d))$ \cite{Simon1990}. Along the paper, we will use the convention that spaces with a dot denote their homogeneous counterpart. Considering fluids of different constant densities, domains evolving by the fluid velocity were proved to preserve its volume \cite{Lions1996}. On the other hand, the propagation of regularity for the free boundary $\partial  D(t)$ was proposed as a challenging open question in the same book (1996 P.L. Lions' density patch problem). 

Recently, global regularity results in 2D, and with smallness assumptions in 3D, have been obtained for low regular positive density and constant viscosity. Global well-posedness was shown for initial discontinuous densities with sufficiently small jumps and small initial velocities \cite{DanchinMucha2012,DanchinMucha2013}. The case of more regular velocity was considered in \cite{HPaicuZ2013}. Finally, in \cite{PaicuZZ2013} the smallness conditions of the density jump were removed. 
After the results above, global-in-time regularity for fluids of different densities (density patch problem) has been studied. Persistence of $C^{2+\gamma}$ regularity of the free boundary results was shown in 2D for $0<\gamma<1$, using paradifferential calculus and striated regularity techniques. The works consider positive densities with small jump first \cite{LiaoZhang2016} and later without smallness assumption \cite{LiaoZhang2016Pre}. Using the approach in \cite{DanchinMucha2012}, propagation of $C^{1+\gamma}$ regularity was given for small density jump and small initial velocity \cite{DanchinZhang2017}. The size restriction was removed in \cite{GancedoG-J2018}, providing global in time regularity for $C^{1+\gamma}$ 2D contours. This approach does not use pa\-ra\-dif\-fer\-en\-tial calculus but bootstrapping arguments, getting propagation of regularity from weak solutions to $C^{1+\gamma}$. It uses an elliptic approach inspired by previous results obtained for 2D Boussinesq temperature fronts \cite{GancedoG-J2017}. See \cite{LiaoLiu2016} for the 3D extension with high regularity and smallness in velocity and density jump.
In the bounded or periodic case, a new approach has been used to allow the case of possibly vanishing density, with no restriction on the jump size, no gravity and constant viscosity \cite{DanchinMucha2019}. In this density zero scenario, the interface evolution would be driven by a Stokes--Navier-Stokes interaction, dealing with a linear Stokes flow for one of the fluids.

For the more singular case of variable viscosity, with density merely bounded, under the additional assumptions that $u_0\in H^1(\mathbb{T}^2)$ and sufficiently small viscosity variation in $L^\infty$, the weak solutions constructed in \cite{Lions1996} satisfy that $u\in L^\infty(0,T;H^1)$, $\sqrt{\rho} u_t\in L^2(0,T;L^2)$, $\rho,\mu\in L^\infty(0,T;L^\infty)$ for all $T>0$ \cite{Desjardins97}.
 However, uniqueness and regularity of these solutions was not known, unless the initial density and viscosity satisfy certain smoothness (at least slightly more than continuity, see \cite{burtea17} and the references therein). 
Recently,  \cite{PaicuZhang2020} global-in-time regularity for positive  density and small viscosity jump is obtained in $\mathbb{R}^2$ under the additional assumption of certain striated regularity for the initial viscosity. In particular, they showed global-in-time propagation of the $H^{5/2}$ regularity of the moving interface for the density and viscosity patch problem. The strategy of the proof uses paradifferential calculus together with striated regularity estimates. The approach is in the spirit of the global regularity result for the 2D vortex patch problem shown in \cite{Chemin1993}.

\subsection*{Main Result.}
In this paper, we prove global-in-time well-posedness for  the two-dimensional density and viscosity patch problem. We study the evolution of two fluids with different densities and viscosities evolving according to inhomogeneous Navier-Stokes \eqref{incom}-\eqref{viscosity}. The initial density and viscosity functions are bounded from below and from above as follows:
$$
0<\rho^m\leq \rho_0(x)\leq\rho^M,\quad 0<\mu^m\leq \mu_0(x)\leq\mu^M.
$$
The initial interface between the fluids is assumed to be a closed $C^{1+\gamma}$ regular curve in the plane. Specifically, we prove the following result.
\begin{thm}\label{Case1}
 Let $D_0\subset \R^2$ be a bounded  domain whose boundary $\partial D_0$ is non self-intersecting and of class $C^{1+\gamma}$, $0<\gamma<1$. Let $\rho_0^{in}\in C^\gamma(\overline{D}_0)$, $\rho_0^{out}\in C^\gamma(\mathbb{R}^2\setminus D_0)$, with $\rho^{out}_0-\rho^{\infty}$, where $\rho^\infty\in \mathbb{R}_+$, and $\mu=\tilde{\mu}(\rho)$ with $\tilde{\mu}$ smooth.  Let the initial density be given by
\begin{equation*}
\begin{aligned}
\rho_0(x)&=\rho_0^{in}(x) 1_{ D_0}(x)+\rho_0^{out}(x) 1_{D_0^c}(x)>0,
\end{aligned}
\end{equation*}
where $1_{D_0}$ is the characteristic function of $D_0$, and let $u_0\in L^r\cap H^{\gamma+\varepsilon}$, $0<\varepsilon<\min\{\gamma,1-\gamma\}<1$, $1<r<\min\{\frac{2}{2-\gamma+\varepsilon},\frac{2}{1+\gamma}\}$ be a divergence-free vector field.
Then, there exists $\delta>0$ such that if
\begin{equation}\label{smalljump}
    \big\|1-\frac{\mu_0}{\bar{\mu}}\big\|_{L^\infty}\leq \delta,\quad\mbox{with}\quad \quad{\overline{\mu}=\frac{\mu^m+\mu^M}{2}},
\end{equation}
 there exists a unique global solution $(u,\rho,\mu)$  of \eqref{incom}-\eqref{viscosity} with $u(x,0)=u_0(x)$, $\rho(x,0)=\rho_0(x)$ and $\mu(x,0)=\mu_0(x)$ such that
$$u\in C(\mathbb{R}_+;H^{\gamma+\varepsilon})\cap L^1(\mathbb{R}_+;W^{1,\infty})\cap L^1(\mathbb{R}_+;C^{1+\gamma}(\overline{D(t)})\cup C^{1+\gamma}(\mathbb{R}^2\setminus D(t))),$$
$$\partial D\in C(\mathbb{R}_+; C^{1+\gamma}),$$
where $D(t)=X(D_0,t)$, with $X$ the particle trajectories \eqref{particle_trajectories} associated to the velocity field and
$$\rho(x,t)=\rho^{in}(x,t) 1_{D(t)}(x)+\rho^{out}(x,t)1_{D(t)^c}(x), \quad \rho(X(y,t),t)=\rho_0(y).$$
Moreover, for any $t\geq0$, 
\begin{equation*}
    \|\sqrt{\rho}u\|_{L^2}^2(t)+\int_0^t\|\sqrt{\mu}\minspace\mathbb{D}u\|_{L^2}^2d\tau \leq \|\sqrt{\rho_0}u_0\|_{L^2}^2, 
\end{equation*}
\begin{equation*}
    t^{1-\gamma-\varepsilon}\|\nabla u\|_{L^2}^{2}+\int_0^t\tau^{1-\gamma-\varepsilon}\|\sqrt{\rho}D_t u\|_{L^2}^{2}\leq C(\|\sqrt{\rho_0}u_0\|_{L^2},\mumin,a^M,\delta)\|u_0\|_{\dot{H}^{\gamma+\varepsilon}}^2,
\end{equation*}
\begin{equation*}
    t^{2-\gamma-\varepsilon}\|D_t u\|_{L^2}^{2}+\int_0^t \tau^{2-\gamma-\varepsilon}\|\nabla D_t u\|_{L^2}^2\leq C(\|\sqrt{\rho_0}u_0\|_{L^2},a^m,a^M,\delta)\|u_0\|_{\dot{H}^{\gamma+\varepsilon}}^2,
\end{equation*}
and
\begin{equation*}
    \int_0^t\|\nabla u\|_{L^\infty}d\tau+\int_0^t\|\nabla u\|_{\dot{C}^\gamma(\overline{D(t)})\cup \dot{C}^\gamma(\mathbb{R}^2\setminus D(t))}d\tau\leq C,
\end{equation*}
with $C=C(a^{m}, \|a_0\|_{C^\gamma({\overline{D}_0})\cap C^\gamma(\mathbb{R}^2\setminus D_0)},\|a^{out}-a^\infty\|_{L^2},\delta,\|u_0\|_{L^r},\|u_0\|_{H^{\gamma+\varepsilon}})$, and $a\equiv \rho, \mu$.
\end{thm}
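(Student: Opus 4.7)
I would follow the bootstrap strategy of \cite{GancedoG-J2018} used for the constant-viscosity density patch problem, modifying it to accommodate the variable viscosity by exploiting the smallness \eqref{smalljump}. The first step is to rewrite the momentum equation as a perturbation of the constant-viscosity Stokes system around $\bar{\mu}$:
\begin{equation*}
\rho D_t u - \bar{\mu}\Delta u + \nabla P = \nabla \cdot \big((\mu-\bar{\mu})\mathbb{D}u\big),
\end{equation*}
so the right-hand side carries a prefactor bounded by $\delta\bar{\mu}$ that can be absorbed by smallness whenever maximal $L^2$ regularity for the constant-coefficient Stokes system is invoked. This reduction is what makes the hypothesis $\|1-\mu_0/\bar{\mu}\|_{L^\infty}\leq\delta$ usable throughout the scheme.

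The a priori estimates are then organized hierarchically. The standard energy identity gives $\sqrt{\rho}u\in L^\infty_t L^2_x$ and $\sqrt{\mu}\mathbb{D}u \in L^2_{t,x}$. Testing the momentum equation against $D_t u$ and using the perturbative rewriting above yields the time-weighted estimate $t^{1-\gamma-\varepsilon}\|\nabla u\|_{L^2}^2 + \int_0^t \tau^{1-\gamma-\varepsilon}\|\sqrt{\rho}D_t u\|_{L^2}^2\, d\tau \leq C\|u_0\|_{\dot H^{\gamma+\varepsilon}}^2$, where smallness of $\delta$ is precisely what allows the perturbative term to be absorbed on the left. An analogous argument one order higher, testing the time-differentiated Stokes system against $D_t u$, upgrades this to the weighted bound for $\|D_t u\|_{L^2}$ and $\|\nabla D_t u\|_{L^2}$ asserted in the theorem. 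The weight exponents are tuned to the assumed $H^{\gamma+\varepsilon}$ regularity of $u_0$ and the choice of $r$ allows standard Lorentz-space interpolation to compensate the lack of $L^2$ integrability at $t=0$.

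The crucial new step is the $L^1_t L^\infty_x$ estimate for $\nabla u$, required to propagate the $C^{1+\gamma}$ regularity of the interface through the tangent vector transport equation. Since $\mathbb{D}u$ jumps across $\partial D(t)$, the standard Calder\'on--Zygmund theory only yields $\nabla u\in BMO$, and Sobolev embedding alone is insufficient. Instead, one represents $\nabla u$ through the Stokes fundamental solution applied to the momentum forcing and to the viscosity-discontinuity term, decomposing the latter as an even singular integral operator acting on the characteristic function $1_{D(t)}$ plus smoother contributions that are controlled by the weighted $D_t u$ bounds. The quantitative harmonic analysis estimates of \cite{GancedoG-J2021} then provide the $C^\gamma$ bound, on each side of the interface, of even SIOs on characteristic functions of $C^{1+\gamma}$ domains, with constants depending linearly on $\|\partial D(t)\|_{C^{1+\gamma}}$ and logarithmically on the velocity. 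This yields simultaneously $\int_0^t\|\nabla u\|_{L^\infty}\,d\tau<\infty$ and the one-sided $\int_0^t\|\nabla u\|_{\dot C^\gamma(\overline{D(\tau)})\cup \dot C^\gamma(\R^2\setminus D(\tau))}\,d\tau<\infty$.

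The main obstacle is closing this loop uniformly on $[0,\infty)$: the smallness $\delta$ must be fixed once and for all, independently of the (a priori time-growing) interface regularity. The logarithmic Osgood-type dependence in the SIO bounds from \cite{GancedoG-J2021} is the mechanism that prevents double-exponential blow-up of $\|\partial D(t)\|_{C^{1+\gamma}}$, while the perturbative structure keeps the viscosity-jump forcing comparable to $\delta\|\nabla u\|$. Once the a priori bounds are uniform in $T$, local well-posedness through a standard contraction argument on mollified data, together with the continuation criterion furnished by the $L^1_t W^{1,\infty}$ control of $u$, produces the global solution; uniqueness follows from a direct Gronwall estimate at the level of the velocity difference in $L^2$.
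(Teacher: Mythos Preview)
Your overall architecture matches the paper's, but there are two genuine gaps. First, the closing mechanism you describe is not what actually works. The bound from \cite{GancedoG-J2021} (Theorem~\ref{SingIntHolder} here) has \emph{polynomial} dependence on $\|D\|_*+\|D\|_{\Lip}$ and is linear in $\|D\|_{\dot C^{1+\gamma}}$; there is no logarithmic structure and no Osgood argument. When you feed the Step~5 H\"older estimate back into the $L^1_tL^\infty_x$ bound and propagate $\|\nabla X\|_{C^\gamma}$, the resulting inequality for $y(T)=\int_0^T\|\nabla u\|_{L^\infty}$ is of the shape $y(T)\le \delta\, C_1 e^{C_2 y(T)}e^{C_3\delta y(T)e^{C_4 y(T)}}+C_5$, i.e.\ double-exponential on the right. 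This is closed by a continuity/contradiction argument that uses the prefactor $\delta$ in front of the bad term, not by any Osgood lemma; an Osgood approach would fail here. Relatedly, you misidentify the role of $u_0\in L^r$: it is not a small-time Lorentz interpolation device but the input to a Huang--Paicu type large-time decay result, which is what makes $\int_0^\infty\|\nabla u\|_{L^p}\,dt$ and the weighted $D_tu$ integrals finite for $t\to\infty$ and hence $C_5$ finite.

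Second, your uniqueness argument will not go through. A direct Gr\"onwall estimate in Eulerian variables on $u^1-u^2$ forces you to estimate $(\rho^1-\rho^2)$ and $(\mu^1-\mu^2)$, which are differences of characteristic functions of two distinct transported domains and carry no smallness or regularity you can exploit. The paper resolves this by passing to Lagrangian coordinates, where the density and viscosity are fixed at $\rho_0,\mu_0$ and the difference equations for $\delta v=v^2-v^1$ involve only $\delta A=A^2-A^1$, controllable by $\|\nabla\delta v\|_{L^2_tL^2_x}$. One then splits $\delta v=w+z$ with $w$ solving a Bogovski\u{\i}-type divergence equation to handle $\nabla_{u^1}\cdot\delta v\neq 0$, and closes on $z$. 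Without this Lagrangian reformulation the uniqueness step has a real hole.
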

Given that $\mu=\tilde{\mu}(\rho)$, along the paper we will use the notation $\muin=\tilde{\mu}(\rho^{in}), \muout=\tilde{\mu}(\rho^{out})$, $\mu^{\infty}=\tilde{\mu}(\rho^{\infty})$, and we will have that $\mu(X(y,t),t)=\mu_0(y)=\tilde{\mu}(\rho_0(y))$, and therefore 
$$\mu(x,t)=\muin(x,t) 1_{D(t)}(x)+\muout (x,t)1_{D(t)^c}(x).$$
The first part of the proof consists in getting a priori estimates which are sharp in Sobolev regularity for the initial velocity to propagate $C^{1+\gamma}$ regularity. This is achieved by introducing time weights and interpolation. Then, a key step in the proof will be to obtain the $L^1$-in-time Lipschitz-in-space estimate for the velocity. This is difficult as the gradient of the velocity is given implicitly by a higher-order Riesz transform applied to a discontinuous function on the moving interface. I.e., this function depends itself on the gradient of the velocity multiplied by the viscosity jump scalar \eqref{aux3I7I8}. We will overcome this difficulty by propagating further regularity on each domain separately. As part of the argument, we will use the following new quantitative estimate.
Consider higher-order Riesz transform operators of even order $2l$, $l\geq1$, given by
	\begin{equation}\label{calderonzygmund}
		\begin{aligned}
			R(f)(x)= \lim_{\varepsilon\to 0}\int_{|x-y|>\varepsilon} K(x-y)f(y)dy,
		\end{aligned}
	\end{equation}
	where
	\begin{equation}\label{kernel}
		K(x)=\frac{P_{2l}(x)}{|x|^{n+2l}},
	\end{equation}
	and $P_{2l}(x)$ is a homogeneous polynomial of degree $2l$ in $\R^2$. Then we have the following result.
\begin{thm}[\cite{GancedoG-J2021}]\label{SingIntHolder}
	Assume $D\subset \mathbb{R}^2$ is a bounded domain of class $C^{1+\gamma}$, $0<\gamma<1$. Then, the Calder\'on-Zygmund operator \eqref{calderonzygmund} with kernel \eqref{kernel} applied to the characteristic function of $ D$, $1_D$, defines a piecewise $C^{\gamma}$ function, \begin{equation*}
	R(1_D)\in C^{\gamma}(\overline{D})\cup C^{\gamma}(\R^2\setminus D).
	\end{equation*}
	Moreover, it satisfies the bound
	\begin{equation*}
	    \begin{aligned}
	        \|R(1_D)\|_{\dot{C}^{\gamma}(\overline{ D})\cup \dot{C}^{\gamma}(\R^2\setminus D)}\leq 
			C \mathcal{P}(\|D\|_{*}\!+\!\|D\|_{\Lip})\|D\|_{\dot{C}^{1+\gamma}}.
	    \end{aligned}
	\end{equation*}
\end{thm}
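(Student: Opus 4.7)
The plan is to convert the volume integral defining $R(1_D)(x)$ into a boundary integral by exploiting the even structure of the kernel, and then to bound the resulting weakly-singular integral on $\partial D$ using the $C^{1+\gamma}$ regularity of the normal.

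First I would use that $K(x)=P_{2l}(x)/|x|^{n+2l}$, being even in $x$ with vanishing spherical mean, admits a representation $K=\nabla\cdot G$ where $G$ is an odd, homogeneous vector-valued kernel of degree $-(n-1)$; in $\R^2$ this means $|G(x)|\lesssim |x|^{-1}$, which is integrable along any Lipschitz curve. Such a $G$ is produced by expanding $P_{2l}$ in solid harmonics and inverting a radial derivative term by term; the zero spherical mean of $K$ guarantees that the delta-contribution at $y=x$ arising from regularization on $D\setminus B_\varepsilon(x)$ vanishes as $\varepsilon\to 0$. The divergence theorem in $y$ then gives
\begin{equation*}
    R(1_D)(x) = -\int_{\partial D} G(x-y)\cdot \nu(y)\,d\sigma(y)
\end{equation*}
for $x$ on either side of $\partial D$, reducing the problem to a single-layer-type potential with a kernel one order less singular.

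Next I would parametrize $\partial D$ by a $C^{1+\gamma}$ curve $z(\alpha)$, where the arc-chord condition $|z(\alpha)-z(\beta)|\gtrsim |\alpha-\beta|/(\|D\|_{*}+\|D\|_{\Lip})$ controls the geometric degeneracy of the parametrization. For $x_1,x_2$ on the same side of $\partial D$ at distance $h=|x_1-x_2|$, I would split $R(1_D)(x_1)-R(1_D)(x_2)$ into (a) a translation piece $\int_{\partial D}[G(x_1-y)-G(x_2-y)]\cdot\nu(y)\,d\sigma(y)$, bounded by $h^{\gamma}$ via interpolation between the local bound $|G|\lesssim 1/|x-y|$ and the gradient bound $|\nabla G|\lesssim 1/|x-y|^{2}$; and (b) a commutator piece in which $\nu(y)$ is replaced by $\nu(y_0)$ at the orthogonal projection $y_0$ of the midpoint of $x_1,x_2$ onto $\partial D$, whose H\"older regularity $|\nu(y)-\nu(y_0)|\leq \|D\|_{\dot C^{1+\gamma}}|y-y_0|^{\gamma}$ contributes exactly the factor $\|D\|_{\dot C^{1+\gamma}}$.

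The main obstacle is to keep the dependence on $\|D\|_{\dot C^{1+\gamma}}$ \emph{strictly linear}, which is what the nonlinear closure in \eqref{aux3I7I8} demands, since there $R$ is applied to a quantity itself involving $\nabla u$. A naive interpolation argument easily produces a quadratic factor once one simultaneously differentiates both the kernel and the normal. The resolution is to exploit the even parity of $K$, which makes $G\cdot\tau(\alpha_0)$ possess an integrable tangential primitive around the reference point $\alpha_0$; a single tangential integration by parts then trades a derivative of the singular kernel for a derivative of $z(\alpha)$, yielding exactly one factor of $\|D\|_{\dot C^{1+\gamma}}$ while leaving the remaining dependence polynomial in $\|D\|_{*}+\|D\|_{\Lip}$ through the arc-chord and Lipschitz constants of the parametrization.
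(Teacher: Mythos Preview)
The paper does not prove this theorem; it is quoted from \cite{GancedoG-J2021} and invoked as a black box in Step~5 (the bounds on $L_5$ and $\tilde L_5$). There is therefore no proof in the present paper to compare your attempt against.

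On the substance of your sketch: the reduction to a boundary integral via $K=\nabla\cdot G$ with $G$ odd and homogeneous of degree $-1$ is the correct opening move and matches the philosophy of the result. The gap is in your treatment of the ``translation piece''. You claim that $\int_{\partial D}[G(x_1-y)-G(x_2-y)]\cdot\nu(y_0)\,d\sigma(y)$ (frozen normal) is controlled by $h^\gamma$ through the interpolated kernel bound $|G(x_1-y)-G(x_2-y)|\lesssim h^\gamma|x-y|^{-1-\gamma}$. Integrating that over a one-dimensional curve gives $h^\gamma\,d(x,\partial D)^{-\gamma}$, which blows up as $x_1,x_2$ approach $\partial D$; kernel size alone cannot produce a uniform $C^\gamma$ bound here. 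What saves this term is the oddness of $G$, but exploiting oddness forces you to compare $\partial D$ with its tangent line at $y_0$, and that comparison already costs one factor of $\|D\|_{\dot C^{1+\gamma}}$. So the $C^{1+\gamma}$ dependence does not live solely in your commutator piece (b); it enters (a) as well, and your accounting of where the linear factor comes from is off.

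Your last paragraph points at the right repair---a tangential integration by parts that converts $G(x-z(\alpha))\cdot\nu(\alpha)|z'(\alpha)|$ into a total $\alpha$-derivative plus a remainder involving $z'(\alpha)-z'(\beta)$---but it is not wired into the decomposition you actually wrote down. A clean argument is organized around that integration by parts from the start: the boundary term is harmless, and the remainder carries exactly one $\|D\|_{\dot C^{1+\gamma}}$ while every other constant is polynomial in $\|D\|_*+\|D\|_{\Lip}$. Your freeze-and-commute splitting should be replaced by (or subordinated to) that step; as written, claims (a) and (b) do not add up to a proof.
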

Above, $\|\cdot\|_*$ measures the arc-chord condition of the boundary of the domain, $\|\cdot\|_{\Lip}$ is the Lipschitz norm, $\|\cdot\|_{\dot{C}^{1+\gamma}}$ is the homogeneous H\"older norm and $\mathcal{P}$ is a polynomial function. If we denote $y(\alpha)$, $\alpha\in [0,2\pi)=\mathbb{T}$, the parametrization of the boundary $\partial D$, these quantities are defined as follows
$$\|D\|_*:=\sup_{\alpha\neq\beta}\frac{|\alpha-\beta|}{|y(\alpha)-y(\beta)|},\quad\|D\|_{\Lip}:=\sup_{\alpha\neq\beta}\frac{|y(\alpha)-y(\beta)|}{|\alpha-\beta|},
\quad \|D\|_{\dot{C}^{1+\sigma}}:=\sup_{\alpha\neq\beta}\frac{|y'(\alpha)- y'(\beta)|}{|\alpha-\beta|^{\sigma}}.
$$
\begin{remark}
	By the boundary condition \eqref{BC2}, one cannot expect to obtain globally-in-space further regularity  than $\nabla u\in L^\infty(\mathbb{R}^2)$. Indeed, if we denote by $\tau$ and $n$ the tangent and normal vectors to the boundary, we have that
	\begin{equation*}
	[\![\mu \mathbb{D}_{ij}u-P\delta_{ij}]\!]n_j=0\Rightarrow
	\left\{\begin{aligned}
	[\![\mu\minspace  n\cdot\mathbb{D}u \cdot n ]\!]&=[\![P]\!],\\
	[\![\mu\minspace \tau\cdot\mathbb{D}u \cdot n ]\!]&=0,
	\end{aligned}\right. 
	\end{equation*}
	thus, if $\nabla u$ were continuous, then we would obtain $[\![\mu]\!]=0$.
\end{remark}

	\vspace{0.5cm}

\noindent\textbf{Outline of the paper:} The rest of the paper is structured as follows. The proof of the main Theorem \ref{Case1} is divided into existence and uniqueness. For the existence, we proceed to obtain the necessary \textit{a priori} estimates. We separate the process into six steps, bootstrapping the regularity obtained from one to the next. Steps 1-3 consist in energy estimates with time weights, which allow to obtain high regularity for the velocity despite the low regularity of the density, viscosity, and initial velocity. Step 4 bounds the crucial $L^1$-in-time Lipschitz regularity of the velocity in terms of the higher H\"older regularity on each side, which is studied in Step 5. The previous steps are combined with quantitative estimates of even singular integral operators acting on $C^{1+\gamma}$ domains in Step 6. This concludes the proof of existence. Next, the uniqueness of solutions is shown. The proof is done in Lagrangian variables, due to the discontinuity jumps of the density and viscosity across the fluid interface.

\section{Proof of Theorem \ref{Case1}}\label{sec:2}

\vspace{0.2cm}

\subsection{Existence:} The proof of existence follows a standard mollifier and compactness argument (see e.g. \cite{DanchinMucha2019}, \cite{PaicuZhang2020}). Once the initial data is smoothed out, we show the \textit{a priori} estimates for the corresponding unique smooth solution. The $L^1$-in-time Lipschitz-in-space estimate for the velocity implies that the solution exists globally in time. The fact that all these estimates will be uniform in the mollifying parameter gives the necessary compactness to pass to the limit. We proceed to obtain the \textit{a priori} estimates. 

\vspace{0.5cm}

\noindent \textbf{Step 1:} $\sqrt{\rho}u\in L^\infty(0,T;L^2), \minspace\sqrt{\mu}\minspace\mathbb{D}u\in L^2(0,T;L^2)$

\vspace{0.3cm}

 We first obtain the $L^2$ energy balance
\begin{equation*}
\begin{aligned}
\frac{1}{2}\frac{d}{dt}\intrtwo \rho |u|^2dx &\!=\!-\!\intrtwo \!\!\mu \pj   u_i \left(\pj   u_i\!+\!\pai u_j\right)dx=\!-2\!\intrtwo \!\mu \Big((\pone u_1)^2\!+\!(\ptwo u_2)^2\!+\!\frac12(\pone u_2\!+\!\ptwo u_1)^2\Big)dx\\
&=-\frac12 \|\sqrt{\mu} \minspace \mathbb{D}u\|_{L^2},
\end{aligned}
\end{equation*}
which after integration in time reads as follows
\begin{equation}
\label{l2balancesym}
\|\sqrt{\rho}u\|_{L^2}^2(t)+\int_0^t\|\sqrt{\mu} \minspace \mathbb{D}u\|_{L^2}^2(\tau)d\tau \leq \|\sqrt{\rho_0}u_0\|_{L^2}^2.
\end{equation}

\noindent \textbf{Step 2:} $t^{\frac{1-\gamma-\varepsilon}2}\nabla u\in L^\infty(0,T;L^2), \minspace t^{\frac{1-\gamma-\varepsilon}2}D_t u\in L^2(0,T;L^2)$
\vspace{0.3cm}

To obtain the result with low regularity initial data, we use an interpolation argument and time-weighted energy estimates \cite{PaicuZZ2013}. Consider the linearized problem
\begin{equation*}
    \begin{aligned}
    \rho(v_t+u\cdot\nabla v)&=\nabla\cdot(\mu\mathbb{D}v-\mathbb{I}_2P),\\
    \rho_t&=-u\cdot\nabla \rho.
    \end{aligned}
\end{equation*}
It holds that
\begin{equation}
\label{l2balancesymv}
\|\sqrt{\rho}v\|_{L^2}^2(t)+\int_0^t\|\sqrt{\mu} \minspace \mathbb{D}v\|_{L^2}^2(\tau)d\tau \leq \|\sqrt{\rho_0}v_0\|_{L^2}^2.
\end{equation}
Next, we take inner product of \eqref{NavierStokes} with $D_tv:=v_t+u\cdot\nabla v$ and then integrate by parts to obtain that
\begin{equation*}
\begin{aligned}
\intrtwo \rho |D_tv|^2dx&=-\intrtwo \pj  D_tv_i \left(\mu\mathbb{D}_{ij}v-P\delta_{ij}\right)dx.
\end{aligned}
\end{equation*}
By the commutator 
\begin{equation*}
\left[D_t,\pj  \right]f=-\pj   u\cdot \nabla f,
\end{equation*}
and the incompressibility condition, it follows that
\begin{equation*}
\begin{aligned}
\intrtwo \rho |D_tv|^2dx&=-\intrtwo D_t \pj  v_i \left(\mu\mathbb{D}_{ij}v-P\delta_{ij}\right)dx-\intrtwo  \pj  u_k\pk  v_i \left(\mu\mathbb{D}_{ij}v-P\delta_{ij}\right)dx\\
&=-\intrtwo D_t \pj  v_i \mu\mathbb{D}_{ij}v\minspace dx-\intrtwo  \pj u_k\pk  v_i \left(\mu\mathbb{D}_{ij}v-P\delta_{ij}\right)dx.
\end{aligned}
\end{equation*}
Noticing that $D_t\mu=0$, we introduce a time weight $t$ followed by integration in time,
\begin{equation}\label{balance2}
\begin{aligned}
\frac{t}2\|\sqrt{\mu}\minspace \mathbb{D} v\|_{L^2}^2(t)+\int_0^t \tau\|\sqrt{\rho} D_tv\|_{L^2}^2(\tau)d\tau&=\frac12\int_0^t\|\sqrt{\mu}\minspace \mathbb{D} v\|_{L^2}^2d\tau- \int_0^t\tau\intrtwo  \mu\pj  u_k\pk  v_i \mathbb{D}_{ij}v dx d\tau\\
&\quad+\int_0^t\tau\intrtwo  \pai u_k\pk  v_i P\minspace  dxd\tau.
\end{aligned}
\end{equation}
We take the divergence of \eqref{NavierStokes} to obtain the following expression for the pressure:
\begin{equation}\label{pressure}
P=\left(-\Delta\right)^{-1}\nabla\cdot \left(\rho D_tv\right)-\nabla\cdot\nabla\cdot\left(-\Delta\right)^{-1}\left(\mu\mathbb{D}v\right).
\end{equation}
Substituting \eqref{pressure} in \eqref{balance2} we have that
\begin{equation}\label{balance22}
\frac{t}2\|\sqrt{\mu}\minspace \mathbb{D} v\|_{L^2}^2(t)+\int_0^t \tau \|\sqrt{\rho} D_tv|_{L^2}^2(\tau)d\tau= \frac12\int_0^t\|\sqrt{\mu}\minspace \mathbb{D} v\|_{L^2}^2d\tau+ I_1+I_2+I_3,
\end{equation}
where
\begin{equation*}
\begin{aligned}
I_1&=-\int_0^t\tau\intrtwo  \mu\pj  u_k\pk  v_i \mathbb{D}_{ij}v \hspace{0.05cm}dxd\tau,\\
I_2&=\int_0^t\tau\intrtwo \pai u_k\pk v_i \left(-\Delta\right)^{-1}\nabla\cdot \left(\rho D_tv\right)dxd\tau,\\
I_3&=-\int_0^t\tau\intrtwo \pai u_k\pk v_i\nabla\cdot\nabla\cdot\left(-\Delta\right)^{-1}\left(\mu\mathbb{D}v\right)dxd\tau.
\end{aligned}
\end{equation*}
We need estimates for the gradient of $v$ in terms of $D_tv$. Notice that the following identity holds
\begin{equation*}
\begin{aligned}
\nabla\cdot (\mu\mathbb{D}v)=\mum \Delta v+\nabla\cdot (\mu\mathbb{D}v-\mum\minspace\mathbb{D}v),
\end{aligned}
\end{equation*}
where $\bar{\mu}$ can be taken as $\bar{\mu}=(\mumax+\mumin)/2$.
Then,
\begin{equation*}
\begin{aligned}
\nabla v=\frac1{\mum} \nabla \Delta^{-1}\mathbb{P}\nabla\cdot (\mu \mathbb{D}v)-\nabla\Delta^{-1}\mathbb{P}\nabla\cdot \left(\Big(\frac{\mu}{\mum}-1\Big)\mathbb{D}v\right),
\end{aligned}
\end{equation*}
where $\mathbb{P}$ denotes the Leray projector,
\begin{equation*}
    \mathbb{P}f=f-\nabla\Delta^{-1}\nabla\cdot f.
\end{equation*}
Therefore, given condition \eqref{smalljump}, the boundedness of singular integrals in $L^q$, $1<q<\infty$, gives that
\begin{equation}\label{Lp}
\|\nabla v\|_{L^p}\leq c(\delta) \|\nabla\Delta^{-1}\mathbb{P}\nabla\cdot (\mu \mathbb{D}v)\|_{L^p},\qquad 2\leq p\leq \max\big\{\frac{2}{1-\gamma-\varepsilon},\frac{2}{\gamma-\varepsilon}\big\}.
\end{equation}	
Applying the Leray projector to \eqref{NavierStokes} we find the relationship between $\mathbb{D}u$ and $D_tu$
\begin{equation}\label{elliptic}
\mathbb{P}\left(\rho D_tv\right)=\mathbb{P}\nabla\cdot\left(\mu \mathbb{D}v\right).
\end{equation}
Recalling the following Gagliardo-Nirenberg inequality in $\mathbb{R}^2$, 
\begin{equation}\label{L4}
    \|f\|_{L^p}\leq c\|f\|_{L^2}^{\frac{2}{p}}\|\nabla f\|_{L^2}^{1-\frac{2}{p}},
    \end{equation}
followed by \eqref{elliptic}, one can find from \eqref{Lp} that
\begin{equation}\label{Lpv}
\|\nabla v\|_{L^p}\leq c(\delta)\|\mu\minspace\mathbb{D}v\|_{L^2}^{\frac2p} \|\nabla \nabla\Delta^{-1} \mathbb{P}(\rho D_t v)\|_{L^2}^{1-\frac2p}\leq c(\delta)\|\mu\minspace\mathbb{D}v\|_{L^2}^{\frac2p}\|\rho D_t v\|_{L^2}^{1-\frac2p}.
\end{equation}
In particular, 
\begin{equation}\label{L3}
\begin{aligned}
\|\nabla v\|_{L^4}\leq c(\delta)\|\mu\minspace\mathbb{D}v\|_{L^2}^{1/2}\|\rho D_t v\|_{L^2}^{1/2}.
\end{aligned}
\end{equation}
Thus, the terms $I_1$ and $I_3$ are readily bounded as follows
\begin{equation*}
\begin{aligned}
I_1+I_3&\leq c\int_0^t \tau\|\nabla u\|_{L^2}\|\nabla v\|_{L^4}\|\mathbb{D}v\|_{L^4}d\tau\\
&\leq \frac14\int_0^t\tau\|\sqrt{\rho}D_t v\|_{L^2}^2d\tau+c(\delta)\int_0^t\tau\|\nabla u\|_{L^2}^2 \|\sqrt{\mu}\minspace\mathbb{D}v\|_{L^2}^2d\tau.
\end{aligned}
\end{equation*}
Then, denoting $\mathcal{H}^1$ the Hardy space, we get for $I_2$ \eqref{balance22} the estimate
\begin{equation*}
    \begin{aligned}
    I_2&\leq c\int_0^t\tau \|\pai u_k\pk v_i\|_{\mathcal{H}^1}\|\left(-\Delta\right)^{-1}\nabla\cdot \left(\rho D_tv\right)\|_{BMO} \minspace d\tau.
    \end{aligned}
    \end{equation*}
Since for each $i$ the term $\partial_i u\cdot \nabla v_i$ is the product of a divergence-free function and a curl-free one, we can apply the div-curl lemma to get
\begin{equation*}
    \|\pai u_k\pk v_i\|_{\mathcal{H}^1}\leq C\|\nabla u\|_{L^2}\|\nabla v\|_{L^2},
\end{equation*} which together with  the embedding $\dot{H}^1\hookrightarrow BMO$ gives that
\begin{equation*}
    \begin{aligned}
    I_2   &\leq c\int_0^t\tau\|\nabla u\|_{L^2}\|\nabla v\|_{L^2}\|\rho D_t v\|_{L^2}d\tau\\
    &\leq\frac14\int_0^t\tau\|\sqrt{\rho} D_t v\|_{L^2}^2d\tau+c\int_0^t\tau\|\nabla u\|_{L^2}^2\|\mathbb{D}v\|_{L^2}^2d\tau.
    \end{aligned}
\end{equation*} 
Therefore, we have that \eqref{balance22} becomes
\begin{equation*}
\begin{aligned}
t\|\sqrt{\mu}\minspace\mathbb{D} v\|_{L^2}^2(t)+\int_0^t \tau\|\sqrt{\rho} D_t v\|_{L^2}^2(\tau)d\tau&\leq  \|\sqrt{\rho_0}v_0\|_{L^2}^2+c(\mumin,\delta)\int_0^t \tau\|\sqrt{\mu}\minspace\mathbb{D} v\|_{L^2}^2\|\sqrt{\mu}\minspace\mathbb{D} u\|_{L^2}^2d\tau.
\end{aligned}
\end{equation*}
Gr\"onwall's lemma followed by \eqref{l2balancesym}
yields the balance
\begin{equation}\label{balanceDvL2}
\begin{aligned}
t\|\sqrt{\mu}\minspace\mathbb{D}v\|_{L^2}^2+\int_0^t \tau\|\sqrt{\rho} D_t v\|_{L^2}^2(t) d\tau\leq c(\mumin,\delta,\|\sqrt{\rho_0}u_0\|_{L^2}) \|\sqrt{\rho_0}v_0\|_{L^2}^2.
\end{aligned}
\end{equation}
We can repeat the steps above without the time weight to obtain that
\begin{equation}\label{balanceDvH1}
\begin{aligned}
\|\sqrt{\mu}\minspace\mathbb{D}v\|_{L^2}^2+\int_0^t \|\sqrt{\rho} D_t v\|_{L^2}^2(\tau) d\tau\leq c(\mumin,\delta,\|\sqrt{\rho_0}u_0\|_{L^2}) \|\sqrt{\mu_0}\minspace\mathbb{D}v_0\|_{L^2}^2.
\end{aligned}
\end{equation}
Thus, the linear operator $Tv_0=\nabla v$ satisfies the bounds $\|Tv_0\|_{L^2}\leq c\|\nabla v_0\|_{L^2}$ and $\|Tv_0\|_{L^2}\leq ct^{-\frac12}\| v_0\|_{L^2}$, and hence we conclude that
\begin{equation*}
    \begin{aligned}
        \|\nabla u\|_{L^2}\leq c(\mumin,\delta,\|\rho_0u_0\|_{L^2})t^{\frac{-1+\gamma+\varepsilon}{2}}\|v_0\|_{\dot{H}^{\gamma+\varepsilon}}.
    \end{aligned}
\end{equation*}
Using similarly Stein's interpolation theorem as in \cite{PaicuZZ2013} for the terms with time integrals, we close the balance in $\dot{H}^{\gamma+\varepsilon}$,
\begin{equation}\label{balanceHg}
\begin{aligned}
t^{1-\gamma-\varepsilon}\|\nabla u\|_{L^2}^2+\int_0^t \tau^{1-\gamma-\varepsilon}\|\sqrt{\rho} D_t u\|_{L^2}^2(\tau) d\tau\leq c(\mumin,\delta,\|\sqrt{\rho_0}u_0\|_{L^2}) \|u_0\|_{\dot{H}^{\gamma+\varepsilon}}^2.
\end{aligned}
\end{equation}
Notice that we can combine \eqref{balanceDvL2} and \eqref{balanceHg} to obtain
\begin{equation}\label{balanceNablau1}
\max\{t^{1-\gamma-\varepsilon},t\}\|\nabla u\|_{L^2}^2+\int_0^t \max\{\tau^{1-\gamma-\varepsilon},\tau\}\|\sqrt{\rho} D_t u\|_{L^2}^2(\tau) d\tau\leq C.
\end{equation}
We will need further time decay. We use the following theorem:
\begin{thm}[\cite{HuangPaicu2014}] \label{HuangPaicuThm}
For $1<r<2$, and $0<\alpha<1$, let $u_0\in L^r\cap H^\alpha$, $a_0-a^\infty\in L^2$, and $0<a^m<a_0\in L^\infty$ with $a\equiv\rho,\mu$. Then, under the assumption of small viscosity contrast, inhomogeneous Navier-Stokes  with initial data $(\rho_0, \mu_0, u_0)$ has a global weak solution and there exists a constant $C_\alpha$ which depends on $\|\rho_0-\rho^\infty\|_{L^2}$, $\|u_0\|_{L^r}$, and $\|u_0\|_{H^\alpha}$, such that there hold
\begin{equation*}
    \begin{aligned}
    \|u(t)\|_{L^2}^2&\leq C_\alpha (t+e)^{-\frac{2}{r}+1},\qquad \|\nabla u(t)\|_{L^2}^2\leq C_\alpha (t+e)^{-\frac{2}{r}+\varepsilon},\\
    \int_0^\infty t^{1-\kappa}(t+e)^{\kappa+\frac{2}{r}-1-\varepsilon}&\|u_t\|_{L^2}^2+\|\mathbb{P}\minspace\text{\em{div}}(\mu \mathbb{D}u)\|_{L^2}^2+\|(\mathbb{I}_2-\mathbb{P})\minspace\text{\em{div}}(\mu \mathbb{D}u)-\nabla P\|_{L^2}^2dt\leq C_\alpha,
    \end{aligned}
\end{equation*}
with any $0<\varepsilon<1$ and $0<\kappa<\alpha$.
\end{thm}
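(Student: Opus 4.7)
My approach combines a Lions-type construction of global weak solutions with Schonbek's Fourier-splitting method, adapted to the inhomogeneous setting by the small-contrast condition \eqref{smalljump}. Writing $\nabla\cdot(\mu \mathbb{D}u)=\bar\mu\Delta u+\nabla\cdot((\mu-\bar\mu)\mathbb{D}u)$, the smallness of the second term (of operator norm $\lesssim\delta$) together with a Friedrichs mollification scheme and DiPerna--Lions transport for $\rho,\mu$ yield uniform $L^\infty_tL^2_x\cap L^2_t\dot H^1_x$ bounds, sufficient compactness to pass to the limit, and the energy inequality $\|\sqrt\rho u\|_{L^2}^2(t)+\int_0^t\|\sqrt\mu\mathbb{D}u\|_{L^2}^2\,d\tau\leq\|\sqrt{\rho_0}u_0\|_{L^2}^2$ in the limit.

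For the $L^2$ decay I would use Schonbek's Fourier splitting. Introducing the ball $B(t)=\{|\xi|\leq g(t)\}$ with $g(t)^2\sim 1/(t+e)$ and using $\mu^m\|\nabla u\|_{L^2}^2\geq g(t)^2(\|u\|_{L^2}^2-\int_{B(t)}|\hat u|^2\,d\xi)$, the energy identity becomes
\[
\frac{d}{dt}\|\sqrt\rho u\|_{L^2}^2+cg(t)^2\|\sqrt\rho u\|_{L^2}^2\leq Cg(t)^2\int_{B(t)}|\hat u(t,\xi)|^2\,d\xi.
\]
The low-frequency tail on the right is controlled via Duhamel's formula for the perturbed heat equation $\rho\partial_tu-\bar\mu\Delta u=G$: Hausdorff--Young gives $\hat u_0\in L^{r'}$ from $u_0\in L^r$, while $\rho-\rho^\infty,\mu-\mu^\infty\in L^2$ and the basic energy bound provide uniform control of the nonlinear and pressure contributions near $\xi=0$. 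Solving the resulting ODE yields $\|u(t)\|_{L^2}^2\leq C_\alpha(t+e)^{-2/r+1}$.

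The $\dot H^1$ bound follows by rerunning the Desjardins-type estimate
\[
\frac{d}{dt}\|\sqrt\mu\mathbb{D}u\|_{L^2}^2+\|\sqrt\rho D_tu\|_{L^2}^2\lesssim\|\nabla u\|_{L^2}^2\|\sqrt\mu\mathbb{D}u\|_{L^2}^2,
\]
which is essentially the computation leading to \eqref{balanceDvH1}, closed by Gr\"onwall with the $L^2$ decay already established; the small $\varepsilon$ loss reflects that $\int_0^\infty\|\nabla u\|_{L^2}^2\,d\tau$ is merely finite, not quantitatively small. For the time-weighted integral I would multiply by $t^{1-\kappa}(t+e)^{\kappa+2/r-1-\varepsilon}$, integrate in time, absorb boundary terms via the established decay, and use the elliptic identity \eqref{elliptic} with the Helmholtz decomposition to rewrite $\|\sqrt\rho D_tu\|_{L^2}^2$ as a sum controlling the three norms appearing in the statement.

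The main obstacle is the low-frequency estimate in Schonbek's method when both $\rho$ and $\mu$ are variable: the pressure is nonlocally coupled to $\rho$, and the convective term is not of pure divergence form. To control $\widehat F(t,\xi)$ near $\xi=0$, where $F$ collects all perturbative contributions, one must simultaneously exploit the smallness of $\delta$ and the $L^2$ integrability of $\rho-\rho^\infty,\mu-\mu^\infty$; neither assumption alone would suffice to recover the critical rate $(t+e)^{-2/r+1}$.
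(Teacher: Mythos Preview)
The paper does not prove this statement at all: Theorem~\ref{HuangPaicuThm} is quoted verbatim from \cite{HuangPaicu2014} and used as a black box to supply the large-time decay needed in \eqref{balanceNablau} and \eqref{balanceDtu_weight}. There is therefore no ``paper's own proof'' to compare your proposal against.

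That said, your sketch is a plausible outline of the Huang--Paicu argument itself: Schonbek's Fourier-splitting for the $L^2$ decay, a Desjardins-type $\dot H^1$ estimate closed by Gr\"onwall, and time-weighted integration for the higher norms is indeed the standard route. Two points worth flagging if you intend to reconstruct the proof. First, the low-frequency control in the inhomogeneous setting is more delicate than you indicate: since $\rho$ is not constant, the equation does not reduce to a perturbed heat equation for $u$ directly, and one typically works with the momentum $\rho u$ or carefully splits $\rho\partial_t u=\rho^\infty\partial_t u+(\rho-\rho^\infty)\partial_t u$; the Duhamel representation you write needs this adjustment. Second, the passage from $\|\sqrt\rho D_tu\|_{L^2}^2$ to the three separate norms in the integral bound is not an ``elliptic identity'' alone: $\|u_t\|_{L^2}$ and $\|D_tu\|_{L^2}$ differ by $\|u\cdot\nabla u\|_{L^2}$, which must be absorbed using the already-established decay of $\|u\|_{L^2}$ and $\|\nabla u\|_{L^2}$, and similarly the Helmholtz pieces require the small-contrast hypothesis to close. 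These are not fatal gaps, but they are the places where the actual work lies.
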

Notice that by \eqref{Lpv} and Young's inequality we have that
\begin{equation*}
    \|D_t u\|_{L^2}^2\leq c(\|u_t\|_{L^2}^2+\|u\|_{L^2}^2\|\nabla u\|_{L^2}^4).
\end{equation*}
Hence by \eqref{balanceHg}, the estimates in Theorem 2.1 above and \eqref{l2balancesym} it is possible to get
\begin{equation*}
    \int_0^t \max\{\tau^{1-\gamma-\varepsilon},\tau^{\frac{2}{r}-\varepsilon}\}\|D_t u\|_{L^2}^2(\tau)d\tau\leq C,
\end{equation*}
with $C=C(a^m, a^M, \|a_0-a^\infty\|_{L^2},\delta,\|u_0\|_{L^r}, \|u_0\|_{H^{\gamma+\varepsilon}})$.
Thus, since $2/r-\varepsilon>1$, we can improve \eqref{balanceNablau1} for large times,
\begin{equation}\label{balanceNablau}
\begin{aligned}
\max\{t^{1-\gamma-\varepsilon},t^{\frac{2}{r}-\varepsilon}\}\|\nabla u\|_{L^2}^2+\int_0^t \max\{\tau^{1-\gamma-\varepsilon},\tau^{\frac{2}{r}-\varepsilon}\}\|\sqrt{\rho} D_t u\|_{L^2}^2(\tau) d\tau\leq C.
\end{aligned}
\end{equation}

\vspace{0.3cm}

\noindent \textbf{Step 3:} $t^{1-\frac{\gamma+\varepsilon}2}D_t u\in L^\infty(0,T;L^2), \minspace t^{1-\frac{\gamma+\varepsilon}2}\nabla D_t u\in L^2(0,T;L^2), \minspace u\in C(\mathbb{R}_+;H^{\gamma+\varepsilon})$

\vspace{0.3cm}

We proceed to obtain higher regularity estimates for $D_t u$. We take $D_t$ in \eqref{NavierStokes} and then inner product with $D_t u$ to obtain that
\begin{equation*}
\intrtwo D_t u \cdot \rho D_t^2u\hspace{0.05cm}dx=\intrtwo D_t u \cdot D_t \nabla\cdot (\mu\mathbb{D}u)\hspace{0.05cm}dx-\intrtwo D_t u\cdot D_t \nabla P\hspace{0.05cm}dx,
\end{equation*} 
which after multiplication by the time weight $t^{2-\gamma-\varepsilon}$ gives that 
\begin{equation}\label{balanceDtu}
\begin{aligned}
\frac12\frac{d}{dt}\Big(t^{2-\gamma-\varepsilon}\intrtwo \rho |D_t u|^2\hspace{0.05cm}dx\Big) &=\frac{2\!-\!\gamma\!-\!\varepsilon}2t^{1-\gamma-\varepsilon}\|\sqrt{\rho}D_tu\|_{L^2}^2+I_4+I_5+I_6,
\end{aligned}
\end{equation}
with
$$
I_4=t^{2-\gamma-\varepsilon}\intrtwo D_t u_i \pj   D_t  (\mu\mathbb{D}_{ij}u)\hspace{0.05cm}dx,\qquad
I_5=
-t^{2-\gamma-\varepsilon}\intrtwo D_t u_i \pj   u_k\pk    (\mu\mathbb{D}_{ij}u)\hspace{0.05cm}dx,$$
and
$$
I_6=-t^{2-\gamma-\varepsilon}\intrtwo D_t u\cdot D_t \nabla P\hspace{0.05cm}dx.
$$
Integration by parts in $I_4$ provides that
\begin{equation*}
\begin{aligned}
I_4&=-t^{2-\gamma-\varepsilon}\intrtwo \pj   D_t u_i \mu D_t\mathbb{D}_{i j}u\hspace{0.05cm}dx\\
&=-t^{2-\gamma-\varepsilon}\intrtwo \pj   D_t u_i \mu (\pj   D_t u_i+\pai D_t u_j)\hspace{0.05cm}dx+t^{2-\gamma-\varepsilon}\intrtwo \pj   D_t u_i \mu (\pj  u_k\pk u_i\!+\!\pai u_k\pk u_j)\hspace{0.05cm}dx\\
&\leq -\frac{t^{2-\gamma-\varepsilon}}{2}\|\sqrt{\mu}\minspace\mathbb{D}D_t u\|_{L^2}^2+2t^{2-\gamma-\varepsilon}\|\sqrt{\mu}\minspace\mathbb{D} D_t u\|_{L^2}\|\nabla u\|_{L^4}^2,
\end{aligned}
\end{equation*}
which by \eqref{L3} (taking $v$ equal to $u$) gives
\begin{equation*}
    I_4\leq -\frac{t^{2-\gamma-\varepsilon}}4\|\sqrt{\mu}\minspace\mathbb{D}D_t u\|_{L^2}^2+c(\delta)\minspace t^{2-\gamma-\varepsilon}\|\mu\mathbb{D}u\|_{L^2}^2\|\rho D_tu\|_{L^2}^2.
\end{equation*}
Integration by parts in term $I_5$ yields
\begin{equation*}
I_5\leq 2\mumax t^{2-\gamma-\varepsilon}\|\nabla D_t u\|_{L^2}\|\nabla u\|_{L^4}^2.
\end{equation*}
The identity
\begin{equation*}
    \partial_k f_i=\pk\Delta^{-1}\pj\mathbb{D}_{ij}f-\nabla\cdot\Delta^{-1}\pk\pai f
\end{equation*}
and the fact that
\begin{equation*}
    \nabla\cdot D_t u=\nabla u\cdot \nabla u,
\end{equation*}
implies that
\begin{equation*}
    \|\nabla D_t u\|_{L^2}^2=\|\mathbb{D}D_t u\|_{L^2}^2+\|\nabla u\cdot\nabla u\|_{L^2}^2.
\end{equation*}
Therefore, applying Young's inequality we obtain
\begin{equation*}
    I_5\leq \frac{t^{2-\gamma-\varepsilon}}8\|\sqrt{\mu}\minspace\mathbb{D}D_t u\|_{L^2}^2+c(\mumin)t^{2-\gamma-\varepsilon}\|\nabla u\|_{L^4}^4.
\end{equation*}
so, using again \eqref{L3}, we have that
\begin{equation}\label{I4I5bound}
    I_4+I_5\leq -\frac{t^{2-\gamma-\varepsilon}}8\|\sqrt{\mu}\minspace\mathbb{D}D_t u\|_{L^2}^2+c(\mumin,\delta)t^{2-\gamma-\varepsilon}\|\mu\mathbb{D}u \|_{L^2}^2\|\rho D_t u\|_{L^2}^2.
\end{equation}
For the $I_6$ term, we first split it as follows
\begin{equation}\label{I6splt}
\begin{aligned}
I_6&=-t^{2-\gamma-\varepsilon}\intrtwo D_tu \cdot  \nabla D_t P\hspace{0.05cm}dx-t^{2-\gamma-\varepsilon}\intrtwo D_tu_i\pai  u_k \pk  P\hspace{0.05cm}dx\\
&= J_1+J_2.
\end{aligned}
\end{equation}
We proceed with $J_2$ first. We substitute the expression \eqref{pressure} for the pressure and integrate by parts to obtain that
\begin{equation*}
\begin{aligned}
J_2&=t^{2-\gamma-\varepsilon}\intrtwo \pk  D_tu_i \pai  u_k (-\Delta)^{-1}\nabla\cdot\nabla\cdot (\mu\mathbb{D}u)\hspace{0.05cm}dx\\
&\quad-t^{2-\gamma-\varepsilon}\intrtwo D_t u_i\pai u_k\pk  (-\Delta)^{-1}\nabla\cdot (\rho D_tu)\hspace{0.05cm}dx\\
&=K_1+K_2.
\end{aligned}
\end{equation*}
The first term $K_1$ is bounded as the previous term $I_5$,
\begin{equation*}
    K_1\leq\frac{t^{2-\gamma-\varepsilon}}{64}\|\sqrt{\mu}\minspace\mathbb{D}D_t u\|_{L^2}^2+c(\mumin,\delta)t^{2-\gamma-\varepsilon}\|\mu\mathbb{D}u \|_{L^2}^2\|\rho D_t u\|_{L^2}^2.
\end{equation*}
For the second one, we integrate by parts twice to get
\begin{equation*}
\begin{aligned}
K_2=&t^{2-\gamma-\varepsilon}\intrtwo \pai u_l\partial_l u_i  u_k\pk  (-\Delta)^{-1}\nabla\cdot (\rho D_tu)\minspace dx\\
&-t^{2-\gamma-\varepsilon}\intrtwo\pk D_tu_i u_k\pai (-\Delta)^{-1}\nabla\cdot(\rho D_tu)\minspace dx.
\end{aligned}
\end{equation*}
Then, 
\begin{equation*}
\begin{aligned}
K_2&\leq c\minspace t^{2-\gamma-\varepsilon}\Big(\|\nabla u\|_{L^4}^{2}\|u\|_{L^4}\|\rho D_tu\|_{L^4}+\|\nabla D_t u\|_{L^2}\|u\|_{L^4}\|\rho D_t u\|_{L^4}\Big),
\end{aligned}
\end{equation*}
so, using repeatedly \eqref{L4} and \eqref{L3}, we can bound it by
\begin{equation*}
\begin{aligned}
K_2&\leq c(\delta)\minspace t^{2-\gamma-\varepsilon}\Big(\|D_tu\|_{L^2}^{\frac32}\|\nabla D_t u\|_{L^2}^{\frac12}\|\nabla u\|_{L^2}^{\frac32}\|u\|_{L^2}^{\frac12}+\|\nabla D_t u\|_{L^2}^{\frac32}\|D_tu\|_{L^2}^{\frac12}\|u\|_{L^2}^{\frac12}\|\nabla u\|_{L^2}^{\frac12}\Big)\\
&\leq \frac{t^{2-\gamma-\varepsilon}}{64}\|\sqrt{\mu}\minspace\mathbb{D} D_tu\|_{L^2}^{2}+c(\mumin,\delta)\minspace t^{2-\gamma-\varepsilon}\|D_tu\|_{L^2}^{2}\|\nabla  u\|_{L^2}^{2}\big(\|u\|_{L^2}^{\frac23}+\|u\|_{L^2}^{2}\big),
\end{aligned}
\end{equation*}
thus 
\begin{equation}\label{J2}
    J_2\leq \frac{t^{2-\gamma-\varepsilon}}{32}\|\sqrt{\mu}\minspace\mathbb{D} D_tu\|_{L^2}^{2}+c(\mumin,\delta)\minspace t^{2-\gamma-\varepsilon}\|D_tu\|_{L^2}^{2}\|\nabla u\|_{L^2}^{2}.
\end{equation}
We proceed with $J_1$ \eqref{I6splt}.
After integration by parts, the term $J_1$ can be written as follows
\begin{equation}\label{J1split}
\begin{aligned}
J_1&=t^{2-\gamma-\varepsilon}\intrtwo \pai u_k\pk  u_i D_t P\minspace dx\\
&=\frac{d}{dt}\Big(t^{2-\gamma-\varepsilon}\intrtwo \pai u_k \pk  u_i P\minspace dx\Big) -t^{2-\gamma-\varepsilon}\intrtwo  D_t (\pai  u_k \pk  u_i)P\minspace dx\\
&\quad-(2-\gamma-\varepsilon)t^{1-\gamma-\varepsilon}\intrtwo \pai u_k\pk u_i P\minspace dx\\
&=K_3+K_4+K_5.
\end{aligned}
\end{equation}
Commuting the time derivative, the term $K_4$ is given by
\begin{equation}\label{k4split}
\begin{aligned}
K_4&=-2t^{2-\gamma-\varepsilon}\intrtwo  \pai  D_t u_k \pk  u_i P\minspace dx+2t^{2-\gamma-\varepsilon}\intrtwo \pai  u_j\pj   u_k\pk  u_i P\minspace dx\\
&=L_1+L_2.
\end{aligned}
\end{equation}
The term $L_1$ is bounded as $J_2$ \eqref{I6splt},
\begin{equation*}
    L_1\leq \frac{t^{2-\gamma-\varepsilon}}{64}\|\sqrt{\mu}\minspace\mathbb{D} D_tu\|_{L^2}^{2}+c(\mumin,\delta)\minspace t^{2-\gamma-\varepsilon}\|D_tu\|_{L^2}^{2}\|\nabla u\|_{L^2}^{2}.
\end{equation*}
Next, we substitute the pressure \eqref{pressure} in $L_2$ to obtain that
\begin{equation*}
\begin{aligned}
L_2&\leq c\minspace t^{2-\gamma-\varepsilon}\|\nabla u\|_{L^4}^{4}+2t^{2-\gamma-\varepsilon}\intrtwo \pai u_j\pj u_k \pk u_i (-\Delta)^{-1}\nabla\cdot(\rho D_tu)\minspace dx.
\end{aligned}
\end{equation*}
Then we note that by integrating by parts twice, the second term can be written as follows
\begin{equation*}
\begin{aligned}
t^{2-\gamma-\varepsilon}\!\intrtwo \!\!\pai u_j\pj u_k \pk u_i (-\Delta)^{-1}\nabla\cdot(\rho D_tu)\minspace &dx=\frac{t^{2-\gamma-\varepsilon}}2\!\intrtwo \!\!\pk u_j\pj u_k  u_i\pai (-\Delta)^{-1}\nabla\cdot(\rho D_tu)\minspace dx,
\end{aligned}
\end{equation*}
and therefore it is bounded as the first term in $K_2$ above. We conclude that
\begin{equation*}
\begin{aligned}
L_2&\leq \frac{t^{2-\gamma-\varepsilon}}{64}\|\sqrt{\mu}\minspace\mathbb{D} D_tu\|_{L^2}^{2}+c(\mumin,\delta)\minspace t^{2-\gamma-\varepsilon}\|D_tu\|_{L^2}^{2}\|\nabla  u\|_{L^2}^{2}(1+\|u\|_{L^2}^{\frac23}),
\end{aligned}
\end{equation*}
and thus
\begin{equation}\label{k4bound}
\begin{aligned}
K_4&\leq \frac{t^{2-\gamma-\varepsilon}}{32}\|\sqrt{\mu}\minspace\mathbb{D} D_tu\|_{L^2}^{2}+c(\mumin,\delta)\minspace t^{2-\gamma-\varepsilon}\|D_tu\|_{L^2}^{2}\|\nabla  u\|_{L^2}^{2}.
\end{aligned}
\end{equation}
Substitution of the expression \eqref{pressure} for the pressure in $K_5$ \eqref{J1split} and integration by parts gives that
\begin{equation*}
    \begin{aligned}
    K_5&=(2\!-\!\gamma\!-\!\varepsilon)t^{1-\gamma-\varepsilon}\intrtwo \pai u_k\pk u_i \nabla\cdot\nabla\cdot\left(-\Delta\right)^{-1}\left(\mu\mathbb{D}v\right)\minspace dx\\
    &\quad+(2\!-\!\gamma\!-\!\varepsilon)t^{1-\gamma-\varepsilon}\intrtwo \pai u_k  u_i \pk\left(-\Delta\right)^{-1}\nabla\cdot \left(\rho D_tv\right)\minspace dx,
    \end{aligned}
\end{equation*}
so using \eqref{L3}, \eqref{L4},
\begin{equation}\label{K5bound}
    \begin{aligned}
    K_5&\leq c(\delta)\minspace t^{1-\gamma-\varepsilon}\Big(\|\nabla u\|_{L^2}^{2}\|\rho D_t u\|_{L^2}+\|\rho D_t u\|_{L^2}^{\frac32}\|\nabla u\|_{L^2}\|u\|_{L^2}^{\frac12}\Big)\\
    &\leq c(\delta)\minspace t^{1-\gamma-\varepsilon}\Big(\|\sqrt{\rho} D_t u\|_{L^2}^{2}+\|\nabla u\|_{L^2}^{4}(1+\|u\|_{L^2}^{2})\Big)\\
     &\leq c(\delta)\minspace t^{1-\gamma-\varepsilon}\Big(\|\sqrt{\rho} D_t u\|_{L^2}^{2}+\|\nabla u\|_{L^2}^{4}\Big).
    \end{aligned}
\end{equation}
Going back to \eqref{J1split}, bounds \eqref{k4bound} and \eqref{K5bound} provide
\begin{equation*}
    \begin{aligned}
    J_1&\leq \frac{d}{dt}\Big(t^{2-\gamma-\varepsilon}\intrtwo \pai u_k \pk  u_i P\minspace dx\Big)+\frac{t^{2-\gamma-\varepsilon}}{32}\|\sqrt{\mu}\minspace\mathbb{D} D_tu\|_{L^2}^{2}+c(\mumin,\delta)\minspace t^{2-\gamma-\varepsilon}\|D_tu\|_{L^2}^{2}\|\nabla  u\|_{L^2}^{2}\\
    &\quad+c(\delta)\minspace t^{1-\gamma-\varepsilon}\Big(\|D_t u\|_{L^2}^{2}+\|\nabla u\|_{L^2}^{4}\Big).
    \end{aligned}
\end{equation*}
Recalling the bound for $J_2$ \eqref{J2}, we obtain for $I_6$ \eqref{I6splt} the following
\begin{equation}\label{I6bound}
    \begin{aligned}
    I_6&\leq \frac{d}{dt}\Big(t^{2-\gamma-\varepsilon}\intrtwo \pai u_k \pk  u_i P\minspace dx\Big)+\frac{t^{2-\gamma-\varepsilon}}{16}\|\sqrt{\mu}\minspace\mathbb{D} D_tu\|_{L^2}^{2}\\
    &\quad+c(\mumin,\delta)\minspace t^{2-\gamma-\varepsilon}\|D_tu\|_{L^2}^{2}\|\nabla  u\|_{L^2}^{2}+c(\delta)\minspace t^{1-\gamma-\varepsilon}\Big(\|D_t u\|_{L^2}^{2}+\|\nabla u\|_{L^2}^{4}\Big).
    \end{aligned}
\end{equation}
Finally, we go back to the balance \eqref{balanceDtu} with \eqref{I4I5bound} and \eqref{I6bound}
\begin{equation*}
\begin{aligned}
\frac{d}{dt}\Big(t^{2-\gamma-\varepsilon}\intrtwo \rho |D_t u|^2\hspace{0.05cm}dx\Big)+\frac{t^{2-\gamma-\varepsilon}}{8}\|\sqrt{\mu}\minspace\mathbb{D}D_t u\|_{L^2}^2 &\leq c(\delta)\minspace t^{1-\gamma-\varepsilon}\Big(\|D_t u\|_{L^2}^{2}+\|\nabla u\|_{L^2}^{4}\Big)\\
&\quad+c(\mumin,\delta)t^{2-\gamma-\varepsilon}\|\nabla u \|_{L^2}^2\| D_t u\|_{L^2}^2\\
&\quad+\frac{d}{dt}\Big(t^{2-\gamma-\varepsilon}\intrtwo \pai u_k \pk  u_i P\minspace dx\Big),
\end{aligned}
\end{equation*} 
and integrate in time to obtain
\begin{equation*}
\begin{aligned}
t^{2-\gamma-\varepsilon}\intrtwo \rho |D_t u|^2\hspace{0.05cm}dx+\frac{1}{8}\int_0^t \tau^{2-\gamma-\varepsilon}\|\sqrt{\mu}\minspace\mathbb{D}D_t u\|_{L^2}^2d\tau &\leq c(\rhomin,\mumin,\delta,\|\sqrt{\rho_0}u_0\|_{L^2}) \|u_0\|_{\dot{H}^{\gamma+\varepsilon}}^2\\
&\quad+c(\mumin,\delta)\int_0^t \tau^{2-\gamma-\varepsilon}\|\nabla u \|_{L^2}^2\| D_t u\|_{L^2}^2d\tau\\
&\quad+t^{2-\gamma-\varepsilon}\intrtwo \pai u_k \pk  u_i P\minspace dx,
\end{aligned}
\end{equation*} 
where we have used the previous energy estimate \eqref{balanceHg}.
Notice that the last term on the right-hand side is like $K_5$ \eqref{J1split} but with an additional factor of $t$ on the time weight. Therefore, from \eqref{K5bound} and \eqref{balanceNablau1}, we have the bound
\begin{equation*}
\begin{aligned}
t^{2-\gamma-\varepsilon}\intrtwo \pai u_k &\pk  u_i P\minspace dx\leq \frac{\minspace t^{2-\gamma-\varepsilon}}{2}\|\sqrt{\rho} D_t u\|_{L^2}^{2}+c(\delta)\minspace t^{2-\gamma-\varepsilon}\|\nabla u\|_{L^2}^{4}\\
&\leq \frac{\minspace t^{2-\gamma-\varepsilon}}{2}\|\sqrt{\rho} D_t u\|_{L^2}^{2}+c(\mumin,\delta,\|\sqrt{\rho_0}u_0\|_{L^2}) \|u_0\|_{\dot{H}^{\gamma+\varepsilon}}^2\frac{t^{2-\gamma-\varepsilon}}{(\max\{t^{1-\gamma-\varepsilon},t\})^2},
\end{aligned}
\end{equation*}
and thus
\begin{equation*}
\begin{aligned}
t^{2-\gamma-\varepsilon}\!\!\intrtwo \!\!\rho |D_t u|^2\hspace{0.05cm}dx\!+\!\frac{1}{4}\!\int_0^t\!\!\! \tau^{2-\gamma-\varepsilon}&\|\sqrt{\mu}\minspace\mathbb{D}D_t u\|_{L^2}^2d\tau \leq c(\mumin,\delta)\int_0^t \tau^{2-\gamma-\varepsilon}\|\nabla u \|_{L^2}^2\| D_t u\|_{L^2}^2d\tau\\
&+c(\rhomin,\mumin,\delta,\|\sqrt{\rho_0}u_0\|_{L^2}) \|u_0\|_{\dot{H}^{\gamma+\varepsilon}}^2(1\!+\!\min\{t^{\gamma+\varepsilon},t^{-\gamma-\varepsilon}\}).
\end{aligned}
\end{equation*} 
Gr\"onwall's lemma then allows us to conclude that
\begin{equation}\label{balanceH2}
\begin{aligned}
t^{2-\gamma-\varepsilon}\|D_t u\|_{L^2}^2+\int_0^t \tau^{2-\gamma-\varepsilon}\|\nabla D_t u\|_{L^2}^2d\tau &\leq c(\rhomin,\mumin,\delta,\|\sqrt{\rho_0}u_0\|_{L^2}) \|u_0\|_{\dot{H}^{\gamma+\varepsilon}}^2.
\end{aligned}
\end{equation} 
Repeating the steps but with the weight $t^{1+\frac{2}{r}-\varepsilon}$ in \eqref{balanceDtu} and using \eqref{balanceNablau} instead of  \eqref{balanceNablau1}, it is analogous to check that the following balance also holds
\begin{equation}
\begin{aligned}
t^{1+\frac{2}{r}-\varepsilon}\|D_t u\|_{L^2}^2+\int_0^t \tau^{1+\frac{2}{r}-\varepsilon}\|\nabla D_t u\|_{L^2}^2d\tau &\leq C,
\end{aligned}
\end{equation} 
and hence
\begin{equation}\label{balanceDtu_weight}
    \begin{aligned}
    \max\{t^{2-\gamma-\varepsilon},t^{1+\frac{2}{r}-\varepsilon}\}\|D_t u\|_{L^2}^2+\int_0^t  \max\{\tau^{2-\gamma-\varepsilon},\tau^{1+\frac{2}{r}-\varepsilon}\}\|\nabla D_t u\|_{L^2}^2d\tau &\leq C,
    \end{aligned}
\end{equation}
where $C=C(a^m, a^M, \|a_0-a^\infty\|_{L^2},\delta,\|u_0\|_{L^r}, \|u_0\|_{H^{\gamma+\varepsilon}})$, $a\equiv \rho, \mu$.
Next, to show that $u\in C(\mathbb{R}_+;H^{\gamma+\varepsilon})$, we write $(INS)$ as a forced heat equation,
\begin{equation*}
    \begin{aligned}
    u_t-\Delta u=-\mathbb{P}(\rho u\cdot\nabla u)+\mathbb{P}((1-\rho)u_t)+\mathbb{P}\nabla\cdot((\mu-1)\mathbb{D}u),
    \end{aligned}
\end{equation*}
and hence the velocity is given by
\begin{equation}\label{bootstrap}
    \begin{aligned}
    u&=e^{t\Delta}u_0+\int_0^te^{(t-\tau)\Delta}\big(-\mathbb{P}(\rho u\cdot\nabla u)+\mathbb{P}((1-\rho)u_t)+\mathbb{P}\nabla\cdot((\mu-1)\mathbb{D}u)\big)(\tau)d\tau\\
    &=v_1+v_2+v_3+v_4.
    \end{aligned}
\end{equation}
Ladyzhenskaya's inequality followed by \eqref{Lpv} gives that
\begin{equation*}
    \|\rho u\cdot \nabla u\|_{L^2}\leq c(\delta)\|u\|_{L^2}^{\frac12}\|\nabla u\|_{L^2}\|D_t u\|_{L^2}^{\frac12},
\end{equation*}
so the estimates \eqref{balanceHg} and \eqref{balanceH2} provides that
\begin{equation*}
    \|\mathbb{P}(\rho u\cdot \nabla u)\|_{L^2}\leq c(\rhomin,\mumin,\delta,\|u_0\|_{H^{\gamma+\varepsilon}})\minspace t^{-1+\frac{3}{4}(\gamma+\varepsilon)}.
\end{equation*}
Similarly, 
\begin{equation*}
    \|\mathbb{P}((1-\rho) u_t)\|_{L^2}\leq c(\rhomin,\mumin,\delta,\|u_0\|_{H^{\gamma+\varepsilon}})\minspace t^{-1+\frac{\gamma+\varepsilon}{2}}.
\end{equation*}
Hence, by Young's inequality for convolutions and the decay properties of the heat kernel, we obtain that
\begin{equation*}
\|v_1\|_{L^\infty_T(\dot{H}^{\gamma+\varepsilon})}\leq c\|u_0\|_{H^{\gamma+\varepsilon}},
\end{equation*}
\begin{equation*}
\begin{aligned}
\|v_2+v_3\|_{L^\infty_T(\dot{H}^{\gamma+\varepsilon})}&\leq c(\rhomin,\mumin,\delta,\|u_0\|_{H^{\gamma+\varepsilon}})\Big|\Big| \int_0^t (t-\tau)^{-\frac{\gamma+\varepsilon}{2}}(\tau^{-1+\frac{\gamma+\varepsilon}{2}}+\tau^{-1+\frac43(\gamma+\varepsilon)})\Big|\Big|_{L^\infty_T}\\
&\leq c(\rhomin,\mumin,\delta,\|u_0\|_{H^{\gamma+\varepsilon}}).
\end{aligned}
\end{equation*}
Estimate \eqref{balanceHg}, 
integration by parts and the arguments above give that
\begin{equation*}
\begin{aligned}
\|v_4\|_{L^\infty_T(\dot{H}^{\gamma+\varepsilon})}\leq c(\mumin,\delta,\|u_0\|_{H^{\gamma+\varepsilon}})\Big|\Big| \int_0^t (t-\tau)^{-\frac{1+\gamma+\varepsilon}{2}}\tau^{-\frac12+\frac{\gamma+\varepsilon}{2}}\Big|\Big|_{L^\infty_T}\leq c(\mumin,\delta,\|u_0\|_{H^{\gamma+\varepsilon}}).
\end{aligned}
\end{equation*}
Therefore, we conclude that
\begin{equation*}
\|u\|_{L^\infty(\mathbb{R}_+;H^{\gamma+\varepsilon})}\leq  c(\rhomin,\mumin,\delta,\|u_0\|_{H^{\gamma+\varepsilon}}).
\end{equation*}
The integration in time in \eqref{bootstrap} provides the continuity, following above estimates.

\subsection*{$L^p$-in-time estimates.}
We summarize here the $L^p(0,T)$ estimates that will be needed in Steps 4 and 5. It should be noticed the different constraints for short and long times. From the estimate $\eqref{balanceNablau}$, we get that for $2r/(2+r(1-\varepsilon))<p<2/(2-\gamma-\varepsilon)$,
\begin{equation*}
    \begin{aligned}
    \int_0^t\|D_tu\|_{L^2}^{p}d\tau&\leq \Big(\int_0^1\tau^{1-\gamma-\varepsilon}\|D_tu\|_{L^2}^2d\tau\Big)^{\frac{p}2}\Big(\int_0^1\tau^{-\frac{p(1-\gamma-\varepsilon)}{2}\frac{2}{2-p}}d\tau\Big)^{\frac{2-p}2}\\
    &\quad+\Big(\int_1^t\tau^{\frac{2}{r}-\varepsilon}\|D_tu\|_{L^2}^2d\tau\Big)^{\frac{p}2}\Big(\int_1^t\tau^{-\big(\frac{2}{r}-\varepsilon\big)\frac{p}{2-p}}d\tau\Big)^{\frac{2-p}2},
    \end{aligned}
\end{equation*}
and thus
\begin{equation}\label{Dtp}
\int_0^t\|D_tu\|_{L^2}^{p}d\tau \leq C(\rhomin, \mumin,\delta,\|u_0\|_{L^r}, \|u_0\|_{H^{\gamma+\varepsilon}}),\qquad \frac{2r}{2+r(1-\varepsilon)}<p<\frac{2}{2-\gamma-\varepsilon}.
\end{equation}
Similarly, we obtain from \eqref{balanceDtu_weight} that
\begin{equation}\label{DDtp}
        \int_0^t\|\nabla D_tu\|_{L^2}^{p}d\tau\leq c(\rhomin,\mumin,\delta,\|\sqrt{\rho_0}u_0\|_{L^2}) \|u_0\|_{\dot{H}^{\gamma+\varepsilon}}^p,\qquad \frac23<p<\frac{2}{3-\gamma-\varepsilon},
\end{equation}
and from Theorem \ref{HuangPaicuThm}, we have that
\begin{equation}\label{Dup}
    \|\nabla u(t)\|_{L^2}\leq C (t+e)^{-\frac{1}{r}+\frac{\varepsilon}{2}},\qquad \int_0^t\|\nabla u\|_{L^2}^p d\tau \leq C,\qquad \frac{2r}{2-r\varepsilon}<p\leq 2.
\end{equation}
Additionally, we have the following estimate
\begin{equation}\label{L1tlp}
    \begin{aligned}
    \int_0^t \|\nabla  u\|_{L^p} d\tau \leq C,\qquad \frac{2r}{2-r(1+\varepsilon)}<p<\infty.
    \end{aligned}
\end{equation}
In fact, by interpolation followed by \eqref{balanceNablau} and \eqref{balanceDtu_weight}, 
\begin{equation}\label{L1tlp_aux}
    \begin{aligned}
    \int_0^t \|\nabla  u\|_{L^p} d\tau \leq  c(\delta)\int_0^t \|\nabla u\|_{L^2}^{\frac2{p}}\|D_t u\|_{L^2}^{1-\frac{2}{p}}d\tau\leq C\Big(\int_0^1\frac{d\tau}{\tau^{1-\frac{\gamma+\varepsilon}2-\frac1{p}}}+\int_1^t\frac{d\tau}{\tau^{\frac{1}{r}+\frac12-\frac{\varepsilon}2-\frac{1}{p}}}\Big)\leq C.
    \end{aligned}
\end{equation}

For clarity in notation, in Steps $4$ and $5$ we will suppress the dependence on the initial data from the constants.

\vspace{0.3cm}

\noindent \textbf{Step 4:} $\nabla u\in L^1(0,T;L^\infty)$

\vspace{0.3cm}

We first use \eqref{elliptic} to write the gradient of $u$ as follows
\begin{equation}\label{aux3I7I8}
\begin{aligned}
\nabla u=-\nabla\Delta^{-1}\mathbb{P}\nabla\cdot \big(\big(\frac{\mu}{\bar{\mu}}-1\big)\mathbb{D}u\big)+\frac1{\bar{\mu}} \nabla \Delta^{-1}\mathbb{P} (\rho D_t u)=I_7+I_8.
\end{aligned}
\end{equation}
We proceed first with $I_8$. Sobolev embedding and \eqref{elliptic} provides that 
\begin{equation*}
    \begin{aligned}
    |I_8|&\leq  c\minspace\|\nabla \Delta^{-1}\mathbb{P} (\rho D_t u)\|_{W^{1,\frac{2}{1-\gamma}}}\\
    &\leq c(\| \nabla \Delta^{-1}\mathbb{P}\nabla\cdot (\mu \mathbb{D} u)\|_{L^{\frac{2}{1-\gamma}}}+\|\nabla\nabla \Delta^{-1}\mathbb{P} (\rho D_t u)\|_{L^{\frac{2}{1-\gamma}}}), 
    \end{aligned}
\end{equation*}
thus the boundedness of singular integrals in $L^p$ followed by \eqref{Lpv}, \eqref{L4}, gives that
\begin{equation}\label{I8bound}
    \begin{aligned}
    |I_8|&\leq  c(\delta)\minspace \Big(\|\nabla u\|_{L^2}^{1-\gamma}\|D_t u\|_{L^2}^{\gamma}+\|D_t u\|_{L^2}^{1-\gamma}\|\nabla D_t u\|_{L^2}^{\gamma}\Big).
    \end{aligned}
\end{equation}
H\"older's inequality with  $p=\frac{2}{(2-\gamma)(1-\gamma)}$ on the second yields that
\begin{equation*}\label{I8bound2}
    \begin{aligned}
    |I_8|&\leq  c(\delta)\minspace \Big(\|\nabla u\|_{L^2}^{1-\gamma}\|D_t u\|_{L^2}^{\gamma}+\|D_t u\|_{L^2}^{\frac{2}{2-\gamma}}+\|\nabla D_t u\|_{L^2}^{\frac{2}{3-\gamma}}\Big),
    \end{aligned}
\end{equation*}
so that \eqref{Dtp}-\eqref{DDtp} guarantees that the $L^1(0,T)$ norm of the last two the terms is bounded uniformly in $T$. The $L^1(0,T)$ of the first term in controlled by \eqref{L1tlp_aux}, and hence for any $t>0$,
\begin{equation}\label{I8L1bound}
    \int_0^t |I_8|d\tau\leq C.
\end{equation}
Next, the term $I_7$  is given in index notation by
\begin{equation*}
    \begin{aligned}
    \big(\nabla\Delta^{-1}\mathbb{P}\nabla\cdot\big(\frac{\mu}{\bar{\mu}}-1\big)\mathbb{D}u\big)_{i,j}&=\pai \Delta^{-1}\big(\delta_{j,k}-\pj \Delta^{-1}\pk\big)\pam\big(\big(\frac{\mu}{\bar{\mu}}-1\big)\mathbb{D}_{k,m} u\big) \\
    &= \big(R_i R_m \delta_{j,k}-R_i R_j R_k R_m\big)\big(\big(\frac{\mu}{\bar{\mu}}-1\big)\mathbb{D}_{k,m} u\big),
    \end{aligned}
\end{equation*}
where $\delta_{j,k}$ denotes the Kronecker delta, $R_i$ the Riesz transform and Einstein's summation convention is used.
Define the corresponding kernels
\begin{equation*}
    \begin{aligned}
    K_{i,j,k,m}(x)=\mathcal{F}^{-1}\big(\frac{\xi_i\xi_m}{|\xi|^2}\delta_{j,k}-\frac{\xi_i\xi_j\xi_k\xi_m}{|\xi|^4}\big)(x),
    \end{aligned}
\end{equation*}
so that 
\begin{equation*}
\begin{aligned}
    \big(\nabla\Delta^{-1}\mathbb{P}\nabla\cdot\big(\big(\frac{\mu}{\bar{\mu}}-1\big)\mathbb{D}u\big)\big)_{i,j}&=\int_{D(t)}K_{i,j,k,m}(x-y) \big(\frac{\muin(y)}{\bar{\mu}}-1\big)\mathbb{D}_{k,m}u(y)\minspace dy\\
    &\quad+\int_{\mathbb{R}^2\setminus D(t)}K_{i,j,k,m}(x-y) \big(\frac{\muout(y)}{\bar{\mu}}-1\big)\mathbb{D}_{k,m}u(y)\minspace dy.
\end{aligned}
\end{equation*}
In the following we shall use the notation
\begin{equation*}
    \nabla\Delta^{-1}\mathbb{P}\nabla\cdot\big(1_{D(t)}\mathbb{D}u\big)=\int_{D(t)}K(x-y)\cdot \mathbb{D}u(y)\minspace dy, 
\end{equation*}
so we have
\begin{equation}\label{I7split}
\begin{aligned}
    I_7=& \int_{D(t)}K(x-y) \cdot\big(\frac{\muin(y)}{\bar{\mu}}-1\big)\mathbb{D}u(y)\minspace dy
    \\
   &+\int_{\mathbb{R}^2\setminus D(t)}K(x-y)\cdot \big(\frac{\muout(y)}{\bar{\mu}}-1\big)\mathbb{D}u(y)\minspace dy= I_{7,1}+I_{7,2}.
\end{aligned}
\end{equation}
We focus on $I_{7,1}$ first. Consider first $x\in \overline{D(t)}$. 
In the following, whenever $x \in \partial D(t)$ we will define $f(x)$ as the limit from inside $D(t)$. Then, 
\begin{equation}\label{I71}
    \begin{aligned}
    I_{7,1}=&\frac{1}{\bar{\mu}}\int_{D(t)}K(x-y)\cdot(\muin(y)-\muin(x))\mathbb{D}u(y)dy\\
    &+\big(\frac{\muin(x)}{\bar{\mu}}-1\big)\int_{D(t)}K(x-y)\cdot\mathbb{D}u(y)dy=J_3+\big(\frac{\muin(x)}{\bar{\mu}}-1\big)J_4.
    \end{aligned}
\end{equation}
Therefore,
\begin{equation*}
    |J_3|\leq c \|\muin\|_{\dot{C}^{\lambda}(\overline{D(t)})}\|\mathbb{D}u\|_{L^\frac{2}{\lambda-\varepsilon}},
\end{equation*}
where 
\begin{equation*}
    \varepsilon<\lambda<\gamma.
\end{equation*}
By interpolation, 
\begin{equation*}
    |J_3|\leq c \|\muin\|_{\dot{C}^{\gamma}(\overline{D(t)})}^{\frac{\lambda}{{\gamma}}}\|\muin-\bar{\mu}\|_{L^\infty}^{1-\frac{\lambda}{{\gamma}}}\|\mathbb{D}u\|_{L^\frac{2}{\lambda-\varepsilon}},
\end{equation*}
and applying Young's inequality we obtain
\begin{equation*}
    \begin{aligned}
    \int_0^t|J_3|d\tau&\leq c\|\muin\|_{L^\infty_t\dot{C}^{\gamma}(\overline{D(t)})}^{\frac{\lambda}{{\gamma}}}\delta^{1-\frac{\lambda}{{\gamma}}}\int_0^t\|\mathbb{D}u\|_{L^\frac{2}{\lambda-\varepsilon}}d\tau\\
    &\leq \frac{\gamma-\lambda}{\gamma}\delta \|\muin\|_{L^\infty_t\dot{C}^{\gamma}(\overline{D(t)})}^{\frac{\lambda}{{\gamma-\lambda}}}+C\frac{\lambda}{\gamma}\Big(\int_0^t\|\mathbb{D}u\|_{L^\frac{2}{\lambda-\varepsilon}}d\tau\Big)^\frac{\gamma}{\lambda}.
    \end{aligned}
\end{equation*}
The last term is bounded in \eqref{L1tlp}, thus
\begin{equation}\label{J3bound}
    \int_0^t|J_3|d\tau\leq \delta \frac{\gamma-\lambda}{\gamma}\|\muin\|_{L^\infty_t\dot{C}^{\gamma}(\overline{D(t)})}^{\frac{\lambda}{{\gamma-\lambda}}}+C.
    \end{equation}
We proceed with $J_4$.
Without loss of generality, let $\varphi_0(x)$ be a defining function for the domain $D_0$, $D_0=\{x\in\R^2:\,\varphi_0(x)>0\}$ (see for example \cite{KRYZ2016}, page 119),
and $\varphi(x,t)=\varphi_0(X^{-1}(x,t))$ the corresponding defining function for $D(t)$. Define $\eta(t)$ as the following cut-off radius
\begin{equation}\label{constcut-off}
    \eta(t)=\min\{\Big(\frac{|\nabla \varphi|_{\inf}}{\|\nabla \varphi\|_{\dot{C}^\gamma}}\Big)^{\frac{1}{\gamma}},1\},
\end{equation}
where we use the notation
\begin{equation*}
    |\nabla\varphi|_{\inf}=\inf_{x\in\partial D}|\nabla\varphi(x)|.
\end{equation*}
Then, we split $J_4$ as follows
\begin{equation*}
\begin{aligned}
    J_4&= \int_{D(t)\cap\{|x-y|\leq \eta\}}\!\!\!K(x-y)\cdot\mathbb{D}u(y)+\int_{D(t)\cap\{|x-y|\geq \eta\}}\!\!\!K(x-y)\cdot\mathbb{D}u(y)= J_{4,1}+J_{4,2}.
    \end{aligned}
\end{equation*}
Since the kernel $K$ is even, the second term on the right below is bounded using \cite{BertozziConstantin1993} (see Geometric Lemma) as follows: 
\begin{equation*}
    \begin{aligned}
        |J_{4,1}|&\leq |\int_{D(t)\cap\{|x-y|\leq \eta\}}\!\!\!\!\!K(x-y)\cdot\big(\mathbb{D}u(y)-\mathbb{D}u(x)\big)dy|\!+\!|\mathbb{D}u(x)\cdot\int_{D(t)\cap\{|x-y|\leq \eta\}}\!\!\!\!\!K(x-y)dy|\\
        &\leq c\|\nabla u\|_{\dot{C}^\gamma(\overline{D(t)})}\int_0^\eta \frac{dr}{r^{1-\gamma}}+c(\gamma)\|\nabla u\|_{L^\infty}\leq c\minspace \|\nabla u\|_{\dot{C}^\gamma(\overline{D(t)})}+c(\gamma)\|\nabla u\|_{L^\infty}.
    \end{aligned}
\end{equation*}
The term $J_{4,2}$ is bounded by
\begin{equation*}
    \begin{aligned}
        |J_{4,2}|\leq c\Big(\int_{\eta(t)}^\infty \frac{dr}{r^{\frac{2+\gamma}{2-\gamma}}}\Big)^{\frac{2-\gamma}{2}}\|\nabla u\|_{L^\frac{2}{\gamma}}=c\minspace\eta^{-\gamma} \|\nabla u\|_{L^\frac{2}{\gamma}}.
    \end{aligned}
\end{equation*}
Thus, joining the bounds for $J_{4,1}$ and $J_{4,2}$ we find that
\begin{equation*}
    \begin{aligned}
        |J_4|\leq  c(\gamma)\|\nabla u\|_{L^\infty}+c\minspace \|\nabla u\|_{\dot{C}^\gamma(\overline{D(t)})}+c\minspace\eta^{-\gamma} \|\nabla u\|_{L^\frac{2}{\gamma}},
    \end{aligned}
\end{equation*}
and going back to \eqref{I71} with the bound \eqref{J3bound} we have that
\begin{equation}\label{I71bound}
\begin{aligned}
    \int_0^t |I_{7,1}|d\tau\leq C +\delta\big(& \frac{\gamma-\lambda}{\gamma}\|\muin\|_{L^\infty_t\dot{C}^{\gamma}(\overline{D(t)})}^{\frac{\lambda}{{\gamma-\lambda}}}+c(\gamma)\int_0^t\|\nabla u\|_{L^\infty}d\tau\\
    &+c\int_0^t\|\nabla u\|_{\dot{C}^\gamma(\overline{D(t)})}d\tau+c\int_0^t\eta^{-\gamma} \|\nabla u\|_{L^\frac{2}{\gamma}}d\tau\big).
\end{aligned}
\end{equation}
Notice that if $x\notin \overline{D(t)}$, we can define $\tilde{x}$ as a point on the boundary with minimum distance to $x$, $\tilde{x}=\arg d(x,\partial D(t))\in \partial D(t)$. Then, by adding and subtracting
$$\muin(\tilde{x})=\lim_{x\to \tilde{x},\, x\in D(t)}\muin(x)$$
in \eqref{I71} instead of $\muin(x)$,
$$ \mathbb{D}u(\tilde{x})=\lim_{x\to \tilde{x},\, x\in D(t)}\mathbb{D}u(x)$$
for the $J_{4,1}$ term instead of $\mathbb{D}u(x)$, and using the triangle inequality, we obtain the same bounds for $J_3$ and $J_{4,1}$. Since the bound for $J_{4,2}$ holds equally for $x\notin \overline{D(t)}$, we have that the estimate \eqref{I71bound} holds for any $x\in\mathbb{R}^2$.

The term $I_{7,2}$ \eqref{I7split} is decomposed further, $I_{7,2}=J_5+J_6$, with:
$$
J_5=\int_{(\mathbb{R}^2\setminus D(t))\cap\{|x-y|>1\}}K(x-y)\cdot \big(\frac{\muout(y)}{\bar{\mu}}-1\big)\mathbb{D}u(y)\minspace dy,
$$
and
$$
J_6=\int_{(\mathbb{R}^2\setminus D(t))\cap\{|x-y|<1\}}K(x-y)\cdot \big(\frac{\muout(y)}{\bar{\mu}}-1\big)\mathbb{D}u(y)\minspace dy.
$$
At this point, it is direct to bound $J_5:$
$$
\int_0^t|J_5|d\tau\leq c\|\frac{\muout}{\bar{\mu}}-1\|_{L^\infty}\int_0^t\|\nabla u\|_{L^{\frac2\gamma}}d\tau\leq c\delta \int_0^t \eta^{-\gamma}\|\nabla u\|_{L^{\frac2\gamma}}d\tau.
$$
The term $J_6$ is handled as $I_{7,1}$ \eqref{I7split} to get the estimate
\begin{equation*}
\begin{aligned}
    \int_0^t |I_{7,2}|d\tau\leq C +\delta\big(& \frac{\gamma-\lambda}{\gamma}\|\muout\|_{L^\infty_t\dot{C}^{\gamma}(\mathbb{R}^2\setminus D(t))}^{\frac{\lambda}{{\gamma-\lambda}}}+c(\gamma)\int_0^t\|\nabla u\|_{L^\infty}d\tau\\
    &+c\int_0^t\|\nabla u\|_{\dot{C}^\gamma(\mathbb{R}^2\setminus D(t))}d\tau+c\int_0^t\eta^{-\gamma} \|\nabla u\|_{L^\frac{2}{\gamma}}d\tau\big).
\end{aligned}
\end{equation*}
Going back to \eqref{I7split}, above estimate and \eqref{I71bound} allows to get
\begin{equation*}
\begin{aligned}
    \int_0^t |I_{7}|d\tau\leq C +\delta\big(& \frac{\gamma-\lambda}{\gamma}\|\mu\|_{L^\infty_t\dot{C}^\gamma(\overline{D(t)})\cap \dot{C}^\gamma(\mathbb{R}^2\setminus D(t))}^{\frac{\lambda}{{\gamma-\lambda}}}+c(\gamma)\int_0^t\|\nabla u\|_{L^\infty}d\tau\\
    &+c\int_0^t\|\nabla u\|_{\dot{C}^\gamma(\overline{D(t)})\cap \dot{C}^\gamma(\mathbb{R}^2\setminus D(t))}d\tau+c\int_0^t\eta^{-\gamma} \|\nabla u\|_{L^\frac{2}{\gamma}}d\tau\big).
\end{aligned}
\end{equation*}
Splitting \eqref{aux3I7I8}, above estimate together with \eqref{I8L1bound} provide
\begin{equation}\label{gradu_inf}
    \begin{aligned}
\int_0^T\|\nabla u\|_{L^\infty}dt\leq C+
        \frac{c\delta}{1\!-\!c(\gamma)\delta}\Big(&\|\mu\|_{L^\infty_T\dot{C}^\gamma(\overline{D(t)})\cap \dot{C}^\gamma(\mathbb{R}^2\setminus D(t))}^{\frac{\lambda}{{\gamma-\lambda}}}+
       \int_0^T\!\!\eta(t)^{-\gamma} \|\nabla u\|_{L^\frac{2}{\gamma}}dt \\
        &+\int_0^T\|\nabla u\|_{\dot{C}^\gamma(\overline{D(t)})\cap \dot{C}^\gamma(\mathbb{R}^2\setminus D(t))}dt\Big).
    \end{aligned}
\end{equation}

\vspace{0.3cm}
\noindent \textbf{Step 5:} $\nabla u\in L^1(0,T;\dot{C}^\gamma(\overline{D(t)})\cap \dot{C}^\gamma(\mathbb{R}^2\setminus D(t)))$
\vspace{0.3cm}

Recalling the expression \eqref{aux3I7I8} for $\nabla u$, we write
\begin{equation}\label{aux4}
    \begin{aligned}
        \nabla u(x+h)-\nabla u(x)&= I_{9,1}+I_{9,2}+I_{10},
    \end{aligned}
\end{equation}
where
$$
I_{9,1}=\int_{D(t)}\big(K(x\!+\!h\!-\!y)\!-\!K(x\!-\!y)\big)\big(\frac{\muin(y)}{\bar{\mu}}-1\big)\mathbb{D}u(y)dy,$$
$$
I_{9,2}=\int_{\mathbb{R}^2\setminus D(t)}\!\!\big(K(x\!+\!h\!-\!y)\!-\!K(x\!-\!y)\big)\big(\frac{\muout(y)}{\bar{\mu}}-1\big)\mathbb{D}u(y)dy,$$
and
$$
I_{10}=\frac1{\bar{\mu}}(\nabla\Delta^{-1}\mathbb{P}\big(\rho D_t u\big)(x+h)-\nabla\Delta^{-1}\mathbb{P}\big(\rho D_t u\big)(x)).
$$
We deal first with the term $I_{10}$. Classical Sobolev embedding together with Gagliardo-Nirenberg inequality \eqref{L4} gives that
\begin{equation}\label{I10}
    \begin{aligned}
    |I_{10}|&\leq c \|\nabla\Delta^{-1}\mathbb{P}\big(\rho D_t u\big)\|_{\dot{W}^{1,\frac{2}{1-\gamma}}}|h|^\gamma\leq c\|D_tu\|_{L^{\frac{2}{1-\gamma}}}|h|^\gamma\leq c \|D_tu\|_{L^2}^{1-\gamma}\|\nabla D_tu\|_{L^2}^{\gamma}|h|^\gamma,
    \end{aligned}
\end{equation}
so repeating the steps for \eqref{I8bound} we conclude that $I_{10}$ is uniformly bounded in $L^1(0,T)$.

Next we deal with $I_{9,1}$ and $I_{9,2}$. Assume that $x$ and $x+h$ belong to $\overline{D(t)}$. 
We proceed first with the term $I_{9,1}$. We decompose $I_{9,1}$ as follows:
\begin{equation}\label{I9}
    \begin{aligned}
        I_{9,1}=L_1+L_2+L_3+L_4+L_5,
    \end{aligned}
\end{equation}
with
\begin{equation*}
    \begin{aligned}
        L_1&=\int_{D(t)\cap \{|x-y|<2|h|\}}K(x+h-y)
        \Big(
        \big(\big(\frac{\muin}{\bar{\mu}}-1\big)\mathbb{D}u\big)(y)-
        \big(\big(\frac{\muin}{\bar{\mu}}-1\big)\mathbb{D}u\big)(x+h)
        \Big)dy,
 \end{aligned}
\end{equation*}
 
\begin{equation*}
    \begin{aligned}        L_2&=\int_{D(t)\cap \{|x-y|<2|h|\}}K(x-y)\Big(\big(\big(\frac{\muin}{\bar{\mu}}-1\big)\mathbb{D}u\big)(x)-\big(\big(\frac{\muin}{\bar{\mu}}-1\big)\mathbb{D}u\big)(y)\Big)dy,
   \end{aligned}
\end{equation*} 
\begin{equation*}
    \begin{aligned}      
        L_3&=\int_{D(t)\cap \{|x-y|<2|h|\}}K(x-y)\Big(\big(\big(\frac{\muin}{\bar{\mu}}-1\big)\mathbb{D}u\big)(x+h)-\big(\big(\frac{\muin}{\bar{\mu}}-1\big)\mathbb{D}u\big)(x)\Big)dy,
        \end{aligned}
\end{equation*}
\begin{equation*}
    \begin{aligned}  
        L_4&=\!\int_{D(t)\cap \{|x-y|\geq 2|h|\}}\!\!\!\!\!\!\!\!(K(x\!+\!h\!-\!y)\!-\!K(x\!-\!y))\Big(        \big(\big(\frac{\muin}{\bar{\mu}}\!-\!1\big)\mathbb{D}u\big)(y)\!-\!
        \big(\big(\frac{\muin}{\bar{\mu}}\!-\!1\big)\mathbb{D}u\big)(x\!+\!h)\Big)dy,
        \end{aligned}
\end{equation*}
and
\begin{equation*}
    \begin{aligned}   L_5&=\big(\frac{\muin(x+h)}{\bar{\mu}}-1\big)\mathbb{D}u(x+h)\int_{D(t)}\big(K(x+h-y)-K(x-y)\big)dy.
    \end{aligned}
\end{equation*}
The terms $L_1$ and $L_2$ are directly bounded by
\begin{equation*}
    |L_1|+|L_2|\leq c\|\big(\frac{\muin}{\bar{\mu}}-1\big)\mathbb{D}u\|_{\dot{C}^\gamma(\overline{D(t)})}|h|^\gamma.
\end{equation*}
Choosing $2|h|\leq \min_{t\in[0,T]}\eta(t)$, we have that (see \cite{BertozziConstantin1993})
 \begin{equation*}
    |L_3|\leq c\|\big(\frac{\muin}{\bar{\mu}}-1\big)\mathbb{D}u\|_{\dot{C}^\gamma(\overline{D(t)})}|h|^\gamma.
\end{equation*}
 By the mean value theorem we also have that
 \begin{equation*}
     |L_4|\leq c\|\big(\frac{\muin}{\bar{\mu}}-1\big)\mathbb{D}u\|_{\dot{C}^\gamma(\overline{D(t)})}|h|^\gamma.
 \end{equation*}
 The term $L_5$ is more singular and we have to use contour dynamics to control it. According to Theorem \ref{SingIntHolder},
\begin{equation*}
    \begin{aligned}
        |L_5|\leq c\|\big(\frac{\muin}{\bar{\mu}}-1\big)\mathbb{D}u\|_{L^\infty(\overline{D(t)})}\mathcal{P}(\|D(t)\|_{\text{Lip}}+\|D(t)\|_{*}) \|D(t)\|_{\dot{C}^{1,\gamma}}|h|^\gamma,
    \end{aligned}
\end{equation*} 
where $\mathcal{P}$ is a polynomial.
Let $z_0(\alpha)$ be a $C^{1,\gamma}$ parametrization of the initial domain $D_0$, so that its evolution via the particle trajectories \eqref{particle_trajectories}
\begin{equation*}
    z(\alpha,t)=X(z_0(\alpha),t),
\end{equation*}
gives the parametrization of $D(t)$. Then, 
since
\begin{equation*}
    \|\pa z\|_{\dot{C}^\gamma}\leq (\|\pa z_0\|_{C^\gamma}+1)^{1+\gamma}\|\nabla X\|_{C^\gamma(\overline{D(t)})},
\end{equation*}
we obtain that
\begin{equation*}
   \begin{aligned}
        |L_5|\leq c\|\big(\frac{\muin}{\bar{\mu}}-1\big)\mathbb{D}u\|_{L^\infty(\overline{D(t)})} \mathcal{P}(\|\nabla \varphi\|_{L^\infty}+|\nabla \varphi|_{\inf}^{-1})\|\nabla X\|_{C^\gamma(\overline{D(t)})}|h|^\gamma,
    \end{aligned}
\end{equation*}
where the constant $c$ only depends on the initial domain. Then, we notice that
\begin{equation*}
    \|\big(\frac{\muin}{\bar{\mu}}-1\big)\mathbb{D}u\|_{\dot{C}^\gamma(\overline{D(t)})}\leq c\|\muin\|_{\dot{C}^\gamma(\overline{D(t)})}\|\nabla u\|_{L^\infty}+c\delta\|\nabla u\|_{\dot{C}^\gamma(\overline{D(t)})}.
\end{equation*}
The same approach is done to control $I_{9,2}$ but using the decomposition $\tilde{L}_1-\tilde{L}_5$ where the domain $D(t)$ is replaced by $\mathbb{R}^2\setminus D(t)$ and using  $((\frac{\muout}{\bar{\mu}}-1)\mathbb{D}u)(\tilde{x})$ and $((\frac{\muout}{\bar{\mu}}-1)\mathbb{D}u)(\tilde{x}_h)$ instead, where $\tilde{x}_h=\arg d(x+h,\partial D(t))$. 
In fact, since for $y\in\mathbb{R}^2\setminus D(t)$ one has that $|y-\tilde{x}|\leq 2|y-x|$, $|y-\tilde{x}_h|\leq 2|x+h-y|$, we obtain
\begin{equation*}
    |\tilde{L}_1|+|\tilde{L}_2|\leq c\|\big(\frac{\muout}{\bar{\mu}}-1\big)\mathbb{D}u\|_{\dot{C}^\gamma(\mathbb{R}^2\setminus D(t))}|h|^\gamma.
\end{equation*}
The term $\tilde{L}^4$ is done analogously. For $\tilde{L}^3$, notice that the only nontrivial case happens when $d(x,\partial D(t))<2|h|$ (otherwise $\tilde{L}_3=0$). Also, since the domain is restricted to a ball, the integral of the kernel is  bounded in the same way (\cite{BertozziConstantin1993}). We thus have that
\begin{equation*}
    |\tilde{L}_3|\leq c\|(\frac{\muout}{\bar{\mu}}-1)\mathbb{D}u\|_{\dot{C}^\gamma(\mathbb{R}^2\setminus D(t))}|\tilde{x}_h-\tilde{x}|^\gamma\leq c\|(\frac{\muout}{\bar{\mu}}-1)\mathbb{D}u\|_{\dot{C}^\gamma(\mathbb{R}^2\setminus D(t))}|h|^\gamma,
\end{equation*}
where in the last step we use that $|\tilde{x}_h-\tilde{x}|\leq |\tilde{x}-(x+h)|+|(x+h)-\tilde{x}_h|\leq 6|h|$. The term $\tilde{L}_5$ follows directly from Theorem \ref{SingIntHolder},
\begin{equation*}
    \begin{aligned}
        |\tilde{L}_5|\leq c\|\big(\frac{\muout}{\bar{\mu}}-1\big)\mathbb{D}u\|_{L^\infty(\mathbb{R}^2\setminus D(t))}\mathcal{P}(\|D(t)\|_{\text{Lip}}+\|D(t)\|_{*}) \|D(t)\|_{\dot{C}^{1,\gamma}}|h|^\gamma.
    \end{aligned}
\end{equation*} 
The case $x, x+h\in\mathbb{R}^2\setminus D(t)$ is done analogously. It yields all the desired estimates to control the H\"older norm for $\nabla u$. In order to gather them all, we will denote $$\dot{C}^\gamma_D=\dot{C}^\gamma(\overline{D(t)})\cap\dot{C}^\gamma(\mathbb{R}^2\setminus D(t))$$
for clarity in notation, and analogously for $C^\gamma_D$, to be used in the following.
Therefore, from splitting \eqref{aux4} and above estimates it is possible to get
\begin{equation}\label{nablaucgamma}
\begin{aligned}
\|\nabla u\|_{\dot{C}^\gamma_D}\leq& \frac{c\delta}{1-c\delta}\|\nabla u\|_{L^\infty} \mathcal{P}(\|\nabla \varphi\|_{L^\infty}+|\nabla \varphi|_{\inf}^{-1})\|\nabla X\|_{C^\gamma_D}\\
&+\frac{c}{1\!-\!c\delta}\|\mu\|_{\dot{C}^\gamma_D}\|\nabla u\|_{L^\infty}+\frac{c}{1\!-\!c\delta}\Big( \|D_t u\|_{L^2}^{\frac{2}{2-\gamma}}\!+\!\|\nabla D_t u\|_{L^2}^{\frac{2}{3-\gamma}}\Big).
\end{aligned}
\end{equation}

\vspace{0.3cm}
\noindent \textbf{Step 6:} Closing all estimates. \vspace{0.3cm}

We now introduce the bound \eqref{nablaucgamma} back in \eqref{gradu_inf} to obtain
\begin{equation}\label{aux7}
    \begin{aligned}
     \int_0^t\|\nabla u\|_{L^\infty}d\tau&\leq c\minspace\delta^2\int_0^t \|\nabla u\|_{L^\infty}\mathcal{P}(\|\nabla \varphi\|_{L^\infty}\!+\!|\nabla \varphi|_{\inf}^{-1})\|\nabla X\|_{C^\gamma_D}d\tau\\
     &\quad+c\minspace\delta\int_0^t \Big(\|D_t u\|_{L^2}^{\frac{2}{2-\gamma}}+\|\nabla D_t u\|_{L^2}^{\frac{2}{3-\gamma}}\Big)d\tau+c\minspace\delta e^{c\int_0^t\|\nabla u\|_{L^\infty}d\tau}\\
     &\quad+c\minspace\delta\int_0^t\frac{\|\nabla \varphi\|_{\dot{C}^\gamma}}{|\nabla\varphi|_{\inf}}\|\nabla u\|_{L^{\frac{2}{\gamma}}}d\tau+C.
    \end{aligned}
\end{equation}
Above we have used the definition of $\eta$ \eqref{constcut-off} and that $\mu$ is transported by the flow:
$$
\|\mu\|_{\dot{C}_D^\gamma}\leq \|\mu_0\|_{\dot{C}_D^\gamma}e^{\gamma\int_0^t\|\nabla u\|_{L^\infty}d\tau}.
$$

We denote
\begin{equation*}
    y(t)=\int_0^t\|\nabla u\|_{L^\infty}d\tau,
\end{equation*}
and notice that
\begin{equation}\label{expLinf}
    \begin{aligned}
    \|\nabla X\|_{L^\infty}&\leq \|\nabla X_0\|_{L^\infty}e^{y(t)},\\
    \|\nabla \varphi\|_{L^\infty}&\leq \|\nabla \varphi_0\|_{L^\infty}e^{y(t)},\\
    |\nabla \varphi|_{\inf}&\geq |\nabla \varphi_0|_{\inf}e^{-y(t)}.
    \end{aligned}
\end{equation}
Furthermore, recalling the definition $\varphi(x,t)=\varphi_0(X^{-1}(x,t))$, we have that
\begin{equation*}
\begin{aligned}
    \nabla \varphi(X(x,t),t)&=(\nabla X(x,t))^{-1}\nabla \varphi_0(x),
\end{aligned}
\end{equation*}

\begin{equation*}
\begin{aligned}
    \frac{\nabla\varphi(X(x,t),t)-\nabla \varphi(X(y,t),t)}{|X(x,t)-X(y,t)|^\gamma}&=\frac{(\nabla X(x,t))^{-1}-(\nabla X(y,t))^{-1}}{|x-y|^\gamma}\nabla\varphi_0(x)\Big(\frac{|x-y|}{|X(x,t)\!-\!X(y,t)|}\Big)^\gamma\\
    &\quad+(\nabla X(y,t))^{-1}\frac{\nabla \varphi_0(x)-\nabla \varphi_0(y)}{|x-y|^\gamma}\Big(\frac{|x-y|}{|X(x,t)-X(y,t)|}\Big)^\gamma,
\end{aligned}
\end{equation*}
and hence, 
\begin{equation*}
    \begin{aligned}
    \|\nabla \varphi\|_{\dot{C}^{\gamma}}&\leq \big(\|(\nabla X)^{-1}\|_{L^\infty}^2\|\nabla X\|_{\dot{C}^{\gamma}_D}\|\nabla \varphi_0\|_{L^\infty}+\|(\nabla X)^{-1}\|_{L^\infty}\|\nabla \varphi_0\|_{\dot{C}^\gamma}\big)\|\nabla X^{-1}\|_{L^\infty}^\gamma.
    \end{aligned}
\end{equation*}
Using \eqref{expLinf}, we conclude that
\begin{equation}\label{expLinf2}
    \|\nabla \varphi\|_{\dot{C}^{\gamma}}\leq c\big(\|\nabla X\|_{\dot{C}^{\gamma}_D}+1\big)e^{c(\gamma)y(t)}.
\end{equation}

Next, we propagate further regularity
\begin{equation*}
\begin{aligned}
\frac{d}{dt}\|\nabla X\|_{L^\infty}&\leq \|\nabla u\|_{L^\infty}\|\nabla X\|_{L^\infty},\\
    \frac{d}{dt}\|\nabla X\|_{\dot{C}^\gamma_D}&\leq \|\nabla u\|_{L^\infty}\|\nabla X\|_{\dot{C}^\gamma_D}+\|\nabla X\|_{L^\infty}^{1+\gamma}\|\nabla u\|_{\dot{C}^\gamma_D},
\end{aligned}
\end{equation*}
and substitute the estimate \eqref{nablaucgamma} to obtain
\begin{equation*}
\begin{aligned}
\frac{d}{dt}\|\nabla X\|_{C^\gamma_D}\leq& \|\nabla u\|_{L^\infty}\|\nabla X\|_{C^\gamma_D}\\
&+ \delta\|\nabla u\|_{L^\infty}\mathcal{P}(\|\nabla \varphi\|_{L^\infty}\!\!+\!\!|\nabla \varphi|_{\inf}^{-1})\|\nabla X\|_{L^\infty}^{1+\gamma}\|\nabla X\|_{C^\gamma_D}\\
&+c\|\mu\|_{\dot{C}^\gamma_D}\|\nabla u\|_{L^\infty}\|\nabla X\|_{L^\infty}^{1+\gamma}+c\|\nabla X\|_{L^\infty}^{1+\gamma}\Big(\|D_t u\|_{L^2}^{\frac{2}{2-\gamma}}+\|\nabla D_t u\|_{L^2}^{\frac{2}{3-\gamma}}\Big).
\end{aligned}
\end{equation*}
We denote
\begin{equation}\label{abcd}
    \begin{aligned}
    x(t)&=\|\nabla X\|_{C^\gamma_D},\\
    a(t)&=\|\nabla u\|_{L^\infty}+\delta\|\nabla u\|_{L^\infty}\mathcal{P}(\|\nabla \varphi\|_{L^\infty}\!+\!|\nabla \varphi|_{\inf}^{-1})\|\nabla X\|_{L^\infty}^{1+\gamma},\\
    d(t)&=c\|\mu\|_{\dot{C}^\gamma_D}\|\nabla u\|_{L^\infty}\|\nabla X\|_{L^\infty}^{1+\gamma}+c\|\nabla X\|_{L^\infty}^{1+\gamma}\Big(\|D_t u\|_{L^2}^{\frac{2}{2-\gamma}}+\|\nabla D_t u\|_{L^2}^{\frac{2}{3-\gamma}}\Big),
    \end{aligned}
\end{equation}
so that the above inequality rewrites as follows
\begin{equation*}
    \dot{x}(t)\leq a(t)x(t)+d(t),
\end{equation*}
and after integration
\begin{equation*}
    x(t)\leq g(t)+\int_0^t a(s)x(s)ds,
\end{equation*}
where 
\begin{equation}\label{gfun}
    g(t)=x(0)+\int_0^t d(s)ds.
\end{equation}
Hence, applying Gr\"onwall's lemma, we find that
\begin{equation}\label{aux6}
    x(t)\leq  \|g\|_{L^\infty_T}e^{\int_0^t a(s)ds}.
\end{equation}

Next, the terms $a(t)$ and $g(t)$ are estimated using \eqref{expLinf} and \eqref{Dtp}-\eqref{DDtp},
\begin{equation*}
    \begin{aligned}
        \|g\|_{L^\infty_T}&\leq c\|\mu\|_{L^\infty_T(\dot{C}^\gamma_D)}y(T)e^{(1+\gamma)y(T)}+C(1+e^{(1+\gamma)y(T)})\leq  C(1+y(T))e^{c\mminspace y(T)}\leq Ce^{c\mminspace y(T)},\\
        \int_0^t a(s)ds&\leq y(t)+c\delta y(t)\minspace e^{c\mminspace y(t)}.
    \end{aligned}
\end{equation*}
Thus, by \eqref{aux6}, we have that
\begin{equation}\label{aux8}
\begin{aligned}
 x(t)\leq Ce^{c\mminspace y(T)}e^{c\delta y(t)e^{c y(t)}}.
\end{aligned}
\end{equation}
We introduce the bound \eqref{aux8} in \eqref{aux7}, together with \eqref{Dtp}-\eqref{L1tlp} and\eqref{expLinf}, to get that
\begin{equation*}
    \begin{aligned}
    y(t)&\leq c\mminspace\delta^2y(T)e^{c\mminspace y(T)}e^{c\delta y(t)e^{c y(t)}}+\delta Ce^{y(T)}+C(1+\delta),
    \end{aligned}
\end{equation*}
that is
\begin{equation*}
    y(T)\leq \delta C_1e^{C_2y(T)}e^{C_3\delta y(T)e^{C_4 y(T)}}+C_5.
\end{equation*}
Assume that 
\begin{equation}\label{eps_condition}
    \delta\leq \min\{\frac{e^{-2C_4C_5}}{2C_3C_5},\frac{C_5e^{-2C_2C_5-1}}{2C_1}\}. 
\end{equation}
Then, we proceed by contradiction. If there exists a first time $T$ such that $y(T)=2C_5$, we would then have that
\begin{equation*}
    \begin{aligned}
    2C_5\leq C_5\Big(\frac{C_1}{C_5}\delta e^{ 2C_2C_5+1}+1\Big)\leq \frac32 C_5.
    \end{aligned}
\end{equation*}
Therefore we conclude that there exists an $\delta>0$ satisfying \eqref{eps_condition} such that for any $T>0$  it holds that
\begin{equation*}
    y(T)=\int_0^T\|\nabla u\|_{L^\infty}dt< 2C_5,
\end{equation*}
and hence, from \eqref{expLinf}, \eqref{aux8} and  \eqref{nablaucgamma}, for all $t>0$, 
\begin{equation*}
\begin{aligned}
\eta(t)^\gamma&\geq C e^{-c\mminspace C_1}>0,\\
\|\nabla X\|_{C^\gamma_D}&\leq C,\\
\int_0^t\|\nabla u\|_{\dot{C}^\gamma_D}d\tau&\leq C.
\end{aligned}
\end{equation*}

\qed

\vspace{0.2cm}

\subsection{Uniqueness:} As in the case of constant viscosity \cite{DanchinMucha2012, DanchinMucha2013, DanchinMucha2019}, the uniqueness of solutions is proved in Lagrangian variables. This is due to the low regularity of the density and viscosity, produced by their jumps across the interface. 

We denote $(\rho_0,\mu_0,v,Q)$ the solution to $(INS)$ in Lagrangian coordinates, \begin{equation*}
    \begin{aligned}
    \rho_0(y)=\rho(X(y,t),t), \quad &\mu_0(y)=\mu(X(y,t),t), \\ v(y,t)=u(X(y,t),t), \quad &Q(y,t)=P(X(y,t),t),
    \end{aligned}
\end{equation*}
where $X$ is the flow defined by \eqref{particle_trajectories}. Note that
\begin{equation*}
    \nabla X(y,t)=\mathbb{I}_2+\int_0^t \nabla v(y,\tau)d\tau,
\end{equation*}
and denote
\begin{equation*}
    A(t)=(\nabla X(\cdot,t))^{-1}.
\end{equation*}
Then, in Lagrangian variables the operators $\nabla$, $\nabla\cdot$ are given as follows. If we denote $\tilde{f}(y,t)=f(X(y,t),t)$, then
\begin{equation*}
\begin{aligned}
    \nabla_u \tilde{f}(y,t)&:= (\nabla f)(X(y,t),t)=A^*\nabla \tilde{f}(y,t),\\
    \nabla_u\cdot \tilde{f}(y,t)&:=(\nabla\cdot f)(X(y,t),t)=\nabla\cdot(A \tilde{f}(y,t)),
\end{aligned}
\end{equation*}
and furthermore, since $\det{A(t)}=1$ due to the incompressibility condition, the following identity holds for vector fields (see e.g. \cite{DanchinMucha2012})
\begin{equation}\label{iden_div}
    \nabla\cdot(A\tilde{f})=A^*:\nabla \tilde{f}.
\end{equation}
Hence, $(INS)$ in $y\in\mathbb{R}^2$, $0<t<T$, rewrites as follows
\begin{equation*}
    \begin{aligned}
    \rho_0 \partial_tv&=\nabla_u\cdot(\mu_0\mathbb{D}_u v-Q\mathbb{I}_2),\\
    \nabla_u\cdot v&=0.
    \end{aligned}
\end{equation*}
The equivalence of these formulations is guaranteed assuming that
\begin{equation}\label{small_lips}
    \int_0^T \|\nabla v\|_{L^\infty}d\tau \leq c<1.
\end{equation}
In that case, one can write that
\begin{equation}\label{small_lips2}
    A(t)=\sum_{j=0}^\infty (-1)^j\Big(\int_0^t\nabla v(\cdot,\tau)d\tau\Big)^j.
\end{equation}
Let $(\rho^1,\mu^1,u^1,P^1,X^1)$, $(\rho^2,\mu^2,u^2,P^2,X^2)$ be two solutions as in Theorem \ref{Case1} for the same initial data. In Lagrangian coordinates, we denote their difference by
\begin{equation*}
    \delta  v= v^2- v^1,\quad     \delta  Q= Q^2- Q^1,
\end{equation*}
so that
\begin{equation*}
    \rho_0\delta v_t=\nabla_{u^1}\cdot(\mu_0\mathbb{D}_{u^1}\delta v-\delta Q\mathbb{I}_2)+\nabla_{u^2}\cdot(\mu_0\mathbb{D}_{u^2} v^2- Q^2\mathbb{I}_2)-\nabla_{u^1}\cdot(\mu_0\mathbb{D}_{u^1} v^2- Q^2\mathbb{I}_2),
\end{equation*}
\begin{equation*}
    \nabla_{u^1}\cdot\delta v=(\nabla_{u^1}-\nabla_{u^2})\cdot v^2,
\end{equation*}
\begin{equation*}
    \delta v|_{t=0}=0.
\end{equation*}    
We will now prove that for $T>0$ small enough, 
\begin{equation}\label{aux_uniq}
\int_0^T\int_{\mathbb{R}^2}|\nabla \delta  v|^2dy dt=0.
\end{equation}
First, let $\delta v=w+z$, where $w$ is assumed to be the solution to the equation
\begin{equation}\label{w_equation}
    \nabla_{u^1}\cdot w=(\nabla_{u^1}-\nabla_{u^2})\cdot  v^2=\nabla \cdot(\delta A v^2).
\end{equation}
Then, the equations for $z$ become
\begin{equation*}
    \begin{aligned}
    \rho_0 z_t-\nabla_{u^1}\cdot(\mu_0\mathbb{D}_{u^1}z)=&-\nabla_{u^1}\delta  Q+\nabla_{u^2}\cdot(\mu_0\mathbb{D}_{u^2} v^2-Q^2\mathbb{I}_2)-\nabla_{u^1}\cdot(\mu_0\mathbb{D}_{u^1} v^2-Q^2\mathbb{I}_2)\\
    &-\rho_0w_t+\nabla_{u^1}\cdot(\mu_0\mathbb{D}_{u^1}w),\\
    \nabla_{u^1}\cdot z=&0,
    \end{aligned}
\end{equation*}
and thus we have that
\begin{equation}\label{zsplit}
    \begin{aligned}
    \frac12\frac{d}{dt}\|\sqrt{\rho_0}\minspace z\|_{L^2}^2+\frac12\|\sqrt{\mu_0}\minspace\mathbb{D}_{u^1}z\|_{L^2}^2=\sum_{j=1}^4 I_j,
    \end{aligned}
\end{equation}
with
\begin{equation*}
    I_1=\int_{\mathbb{R}^2}z\cdot \big(\nabla_{u^2}\cdot(\mu_0\mathbb{D}_{u^2} v^2)-\nabla_{u^1}\cdot(\mu_0\mathbb{D}_{u^1} v^2)\big) dy,
\end{equation*}
\begin{equation*}
    I_2=\int_{\mathbb{R}^2}z\cdot (\nabla_{u^1}Q^2-\nabla_{u^2}Q^2) dy,
\end{equation*}
\begin{equation*}
    I_3=-\int_{\mathbb{R}^2}\rho_0 w_t\cdot z dy,
\end{equation*}
\begin{equation*}
    I_4=\int_{\mathbb{R}^2}\nabla_{u^1}\cdot(\mu_0\mathbb{D}_{u^1}w)\cdot z dy,
\end{equation*}
where we have used that $\nabla_{u^1}\cdot z=0$.
We proceed to estimate each of these terms. Notice first that if $ v^1$, $ v^2$ satisfy \eqref{small_lips}, then from \eqref{small_lips2} we obtain that
\begin{equation}\label{A_t}
    \|\delta A(t)\|_{L^2}\leq Ct^{\frac12}\|\nabla \delta  v\|_{L^2_tL^2}. 
\end{equation}
Next, we integrate by parts to obtain that
\begin{equation*}
    \begin{aligned}
|I_1|&\leq \int_{\mathbb{R}^2}\mu_0 |\nabla z|\big|(\delta A A_2^*+A_1\delta A^*)\nabla  v^2+\delta A(\nabla  v^2)^*A_2+A_1(\nabla v^2)^*\delta A\big|dy,
    \end{aligned}
\end{equation*}
and therefore
\begin{equation*}
    |I_1|\leq C\|\nabla z\|_{L^2}t^{\frac12}\|\nabla  v^2\|_{L^\infty}t^{-\frac12}\|\delta A\|_{L^2}.
\end{equation*}
Thus, using \eqref{A_t} and integrating in time,
\begin{equation*}
    \int_0^T |I_1|dt\leq C\|\nabla \delta  v\|_{L^2_TL^2}^2 \|t^{\frac12}\nabla  v^2\|_{L^2_TL^\infty}^2+\frac{\mu^m}{8}\|\nabla z\|_{L^2_TL^2}^2.
\end{equation*}

Similarly, integration by parts and identity \eqref{iden_div} provide that
\begin{equation*}
    \begin{aligned}
    I_2=\int_{\mathbb{R}^2}\delta A^* :\nabla z \minspace Q^2dy.
    \end{aligned}
\end{equation*}
Now, we substitute the pressure by its expression in \eqref{pressure} to get
\begin{equation*}
    \begin{aligned}
    I_2=\int_{\mathbb{R}^2}\delta A^*:\nabla z (\nabla\cdot\nabla\cdot\Delta^{-1})(\mu^2\mathbb{D}u^2)(X^2(y))dy-\int_{\mathbb{R}^2}\delta A^*:\nabla z (\Delta^{-1}\nabla\cdot)(\rho^2D_t u^2)(X^2(y)) dy,
    \end{aligned}
\end{equation*}
and using again \eqref{iden_div} we integrate by parts back in the second term,
\begin{equation*}
    \begin{aligned}
    I_2=&\int_{\mathbb{R}^2}\delta A^*:\nabla z (\nabla\cdot\nabla\cdot\Delta^{-1})(\mu^2\mathbb{D}u^2)(X^2(y))dy\\
    &+\int_{\mathbb{R}^2}\delta A \minspace z\cdot(\nabla  \Delta^{-1}\nabla\cdot)(\rho^2D_t u^2)(X^2(y))\nabla X^2(y) dy.
    \end{aligned}
\end{equation*}
Therefore, 
\begin{equation*}
    \begin{aligned}
    |I_2|\leq & C\|t^{-\frac12}\delta A\|_{L^2}\|t^{\frac12}(\nabla\cdot\nabla\cdot\Delta^{-1})(\mu^2\mathbb{D}u^2)\|_{L^\infty}\|\nabla z\|_{L^2}\\
    &+C\|t^{-\frac12}\delta A\|_{L^2}\|t^{\frac12}\nabla \Delta^{-1}\nabla\cdot(\rho^2 D_t  u^2)\|_{L^6}\|z\|_{L^2}^{\frac23}\|\nabla z\|_{L^2}^{\frac13},
    \end{aligned}
\end{equation*}
where we have used \eqref{L4}. Calderon-Zygmund and Young's inequalities provide
\begin{equation*}
\begin{aligned}
\int_0^T|I_2|dt\leq& C\|\nabla \delta  v\|_{L^2_TL^2}^2\|t^{\frac12} (\nabla\cdot\nabla\cdot\Delta^{-1})(\mu^2\mathbb{D}u^2)\|_{L_T^2L^\infty}^2+\frac{\mu^m}{16} \|\nabla z\|_{L^2_TL^2}^2\\
&+C\|\nabla \delta  v\|_{L^2_T(L^2)}\|t^{\frac12}\rho^2 D_t u^2\|_{L^\frac65_TL^6}\|z\|_{L^\infty_TL^2}^{\frac23}\|\nabla z\|_{L^2_TL^2}^\frac13\\
\leq& C\|\nabla \delta  v\|_{L^2_TL^2}^2(\|t^{\frac12} (\nabla\cdot\nabla\cdot\Delta^{-1})(\mu^2\mathbb{D}u^2)\|_{L_T^2L^\infty}^2+\|t^{\frac12}\rho^2 D_t u^2\|_{L^\frac65_TL^6}^2)\\
&+\frac14\|z\|^2_{L^\infty_TL^2}+\frac{\mu^m}{8} \|\nabla z\|_{L^2_TL^2}^2.
\end{aligned}
\end{equation*}
We are left to show that $t^\frac12(\nabla\cdot\nabla\cdot\Delta^{-1})(\mu^2\mathbb{D}u^2)\in L^2(0,T;L^\infty)$ and $t^\frac12  D_tu^2\in L^\frac65(0,T;L^6)$. The latter follows from \eqref{L4} and the regularity estimates in Theorem \ref{Case1}. For the first, it is enough to define $y(T)=\int_0^T\tau \|\nabla u\|_{L^\infty}^2d\tau$ and repeat the Steps $4$ and $5$, by noticing that instead of \eqref{I8bound} now we would have 
\begin{equation*}
t\|\nabla u\|_{L^{\frac{4}{1-\gamma}}}+t\|D_t u\|_{L^{\frac{4}{1-\gamma}}}\leq c(T),    
\end{equation*}
which follows by the regularity estimates in Theorem \ref{Case1}. The constant $c(T)$ above is continuous, increasing, and such that $c(0)=0$.
We are done with $I_1$ and $I_2$. To deal with $I_3$ and $I_4$, we need to study $w$ first.
\begin{lemma}\label{w_lemma}
Let $A(t)$ be a matrix-valued function on $[0,T]\times \mathbb{R}^2$ satisfying
\begin{equation*}
    \rm{det}{A}=1.
\end{equation*}
There exists a constant $c$ such that if
\begin{equation*}
    \|\mathbb{I}_2-A\|_{L^\infty_TL^\infty}+\|A_t\|_{L^{\frac65}_TL^6}\leq c,
\end{equation*}
then for all functions $g$ in $L^2(0,T;L^2)$ satisfying
\begin{equation*}
    g=\nabla\cdot R,\quad R\in L^{\infty}_TL^2, \quad R_t\in L^{\frac65}_TL^{\frac32},
\end{equation*}
the equation 
\begin{equation*}
    \nabla \cdot(A w)=g\qquad \text{ in } [0,T]\times\mathbb{R}^2
\end{equation*}
has a solution $w$ in the space
\begin{equation*}
    W_T=\{w\in L^{\infty}_TL^2,\nabla w\in L^2_TL^2, w_t\in L^{\frac65}_TL^{\frac32}\},
\end{equation*}
that satisfies
\begin{equation*}
\begin{aligned}
    \|w\|_{L^{\infty}_TL^2}&\leq C\|R\|_{L^{\infty}_TL^2},\\
    \|\nabla w\|_{L^2_TL^2}&\leq C\|g\|_{L^2_TL^2},\\
    \|w_t\|_{L^{\frac65}_TL^{\frac32}}&\leq C\|R\|_{L^\infty_TL^2}+C\|R_t\|_{L^{\frac65}_TL^{\frac32}}.
\end{aligned}
\end{equation*}
\end{lemma}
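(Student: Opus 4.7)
The plan is to look for $w$ in the form of a gradient, $w=\nabla\phi$, and use the assumption $\det A=1$ together with the Piola-type identity \eqref{iden_div} to recast $\nabla\cdot(Aw)=g$ in the non-divergence form $A^{*}:\nabla w=g$. Substituting $w=\nabla\phi$ and using the symmetry of $\nabla^{2}\phi$, this becomes the scalar elliptic equation
\begin{equation*}
A^{\mathrm{sym}}:\nabla^{2}\phi=g,\qquad A^{\mathrm{sym}}:=\tfrac{1}{2}(A+A^{*}),
\end{equation*}
a small $L^{\infty}$ perturbation of the Poisson equation since $\|A^{\mathrm{sym}}-\mathbb{I}_{2}\|_{L^{\infty}}\leq \|A-\mathbb{I}_{2}\|_{L^{\infty}}\leq c$.

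To prove existence of $\phi$ with $\nabla\phi\in L^{\infty}_{T}L^{2}$ and the first bound $\|w\|_{L^{\infty}_{T}L^{2}}\leq C\|R\|_{L^{\infty}_{T}L^{2}}$, I would apply Lax--Milgram at each fixed $t$ to the equivalent divergence form $\nabla\cdot(A\nabla\phi)=\nabla\cdot R$ on $\dot{H}^{1}(\mathbb{R}^{2})/\mathbb{R}$. The bilinear form $(\phi,\psi)\mapsto \int A\nabla\phi\cdot\nabla\psi\,dy$ is coercive with constant $1-c$ for $c<1$, and testing against $\phi$ itself yields $\|\nabla\phi\|_{L^{2}}\leq C\|R\|_{L^{2}}$.

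The gradient estimate $\|\nabla w\|_{L^{2}_{T}L^{2}}\leq C\|g\|_{L^{2}_{T}L^{2}}$ follows from a perturbed Calder\'on--Zygmund argument applied to the non-divergence form. Rewriting the equation as $\Delta\phi=g-(A^{\mathrm{sym}}-\mathbb{I}_{2}):\nabla^{2}\phi$ and setting up the Neumann series $\phi_{n+1}=\Delta^{-1}(g-(A^{\mathrm{sym}}-\mathbb{I}_{2}):\nabla^{2}\phi_{n})$ in the space $\{\phi:\nabla^{2}\phi\in L^{2}\}$, the standard Calder\'on--Zygmund bound $\|\nabla^{2}\phi\|_{L^{2}}\leq C_{0}\|\Delta\phi\|_{L^{2}}$ in $\mathbb{R}^{2}$ gives
\begin{equation*}
\|\nabla^{2}\phi_{n+1}\|_{L^{2}}\leq C_{0}\|g\|_{L^{2}}+C_{0}c\,\|\nabla^{2}\phi_{n}\|_{L^{2}}.
\end{equation*}
Choosing $c$ so that $C_{0}c<1/2$ makes the scheme contractive and delivers $\|\nabla w\|_{L^{2}_{T}L^{2}}=\|\nabla^{2}\phi\|_{L^{2}_{T}L^{2}}\leq C\|g\|_{L^{2}_{T}L^{2}}$.

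Finally, the time derivative estimate follows by differentiating $\nabla\cdot(Aw)=\nabla\cdot R$ in $t$ to obtain $\nabla\cdot(Aw_{t})=\nabla\cdot(R_{t}-A_{t}w)$, an equation of the same type for $w_{t}=\nabla\phi_{t}$. The $L^{3/2}$ analogue of the previous two steps (valid by the same perturbation scheme, since $3/2$ is close to $2$ and the Riesz transforms are bounded on $L^{p}$ for $1<p<\infty$) yields
\begin{equation*}
\|w_{t}\|_{L^{3/2}}\leq C\|R_{t}-A_{t}w\|_{L^{3/2}}\leq C\|R_{t}\|_{L^{3/2}}+C\|A_{t}\|_{L^{6}}\|w\|_{L^{2}},
\end{equation*}
by H\"older with $1/6+1/2=2/3$. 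Integrating in $t$ with $\|A_{t}\|_{L^{6/5}_{T}L^{6}}\leq c$ and the previously-obtained $L^{\infty}_{T}L^{2}$ bound on $w$ closes the last estimate. The main technical obstacle is the perturbed Calder\'on--Zygmund step: because $A$ has no spatial regularity, the cross term $(A^{\mathrm{sym}}-\mathbb{I}_{2}):\nabla^{2}\phi$ can only be handled through the $L^{\infty}$ smallness of $A-\mathbb{I}_{2}$, and working in the non-divergence form via the Piola identity coming from $\det A=1$ is precisely what makes this available.
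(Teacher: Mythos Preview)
Your approach is correct and matches the standard proof that the paper defers to (it gives no details, citing Lemma~A.2 of \cite{DanchinMucha2019}): the gradient ansatz $w=\nabla\phi$ together with the Piola identity from $\det A=1$ reduces the problem to a scalar elliptic equation that is a small $L^\infty$ perturbation of the Laplacian, handled by a Neumann series in the relevant $L^p$ spaces. The only point worth flagging is that Lax--Milgram is specific to $L^2$, so for the $L^{3/2}$ bound on $w_t$ you should run the fixed-point argument directly in divergence form, $\nabla\phi_t=\nabla\Delta^{-1}\nabla\cdot\bigl(\tilde R-(A-\mathbb I_2)\nabla\phi_t\bigr)$, which is contractive on $L^p$ for any $1<p<\infty$ once $c$ is small---exactly as you indicate.
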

\begin{proof}
The proof follows as in Lemma A.2. of \cite{DanchinMucha2019} with minor modifications.
\end{proof}
\begin{lemma}\label{w_lemma2}
The solution $w$ to \eqref{w_equation} given by Lemma \ref{w_lemma} satisfies
\begin{equation}\label{w_estimates}
    \begin{aligned}
    \|w\|_{L^{\infty}_TL^2}+\|\nabla w\|_{L^2_TL^2}+\|w_t\|_{L^{\frac65}_TL^{\frac32}}\leq c(T)\|\nabla \delta  v\|_{L^2_TL^2},
    \end{aligned}
\end{equation}
where $c(T)$ is a continuous increasing function of $T$ with $c(0)=0$.
\end{lemma}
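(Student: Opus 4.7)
The plan is to apply Lemma \ref{w_lemma} with the matrix $A=A_1$ (the inverse Jacobian of the flow $X^1$) and with
\begin{equation*}
R:=\delta A\cdot v^2,\qquad g=\nabla\cdot R,
\end{equation*}
which matches the right-hand side of \eqref{w_equation}. It then suffices to verify the hypotheses of the auxiliary lemma for $A_1$, and to control $\|R\|_{L^\infty_TL^2}$ and $\|R_t\|_{L^{6/5}_TL^{3/2}}$ by $c(T)\|\nabla\delta v\|_{L^2_TL^2}$ with $c(T)\to 0$ as $T\to 0^+$.

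First I verify the hypotheses of Lemma \ref{w_lemma} on $A_1$. Incompressibility gives $\det A_1=1$. The Neumann expansion \eqref{small_lips2}, together with the smallness assumption \eqref{small_lips}, yields $\|\mathbb{I}_2-A_1\|_{L^\infty_TL^\infty}\le c$ as required. Differentiating the series term by term, the leading order of $\partial_tA_1$ is $-\nabla v^1$ and all higher terms are controlled by products involving $\int_0^t\nabla v^1$, which are small in $L^\infty$. Hence $\|(A_1)_t\|_{L^{6/5}_TL^6}$ is dominated, up to a harmless multiplicative factor, by $\|\nabla v^1\|_{L^{6/5}_TL^6}$; this last quantity is made small by Hölder's inequality in time and the integrability estimate \eqref{L1tlp}.

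Next I estimate $R$. From \eqref{A_t}, $\|\delta A\|_{L^\infty_TL^2}\le C\,T^{1/2}\|\nabla\delta v\|_{L^2_TL^2}$; combined with the fact that $v^2=u^2\circ X^2\in L^\infty_TL^\infty$ (which follows from $u^2\in L^\infty_TL^2\cap L^1_TW^{1,\infty}$ and Sobolev embedding in 2D, via Theorem \ref{Case1}), I obtain
\begin{equation*}
\|R\|_{L^\infty_TL^2}\le c(T)\|\nabla\delta v\|_{L^2_TL^2},\qquad c(T)\to 0.
\end{equation*}
For $R_t=(\delta A)_t\,v^2+\delta A\,(v^2)_t$, the series \eqref{small_lips2} identifies the leading order of $(\delta A)_t$ as $-\nabla\delta v$, so $\|(\delta A)_t\|_{L^2_TL^2}\le C\|\nabla\delta v\|_{L^2_TL^2}$. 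Using $\|v^2\|_{L^3_TL^6}\le c(T)$ via Gagliardo-Nirenberg interpolation from $u^2\in L^\infty_TL^2\cap L^2_T\dot{H}^1$, Hölder yields $\|(\delta A)_t\,v^2\|_{L^{6/5}_TL^{3/2}}\le c(T)\|\nabla\delta v\|_{L^2_TL^2}$.

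The remaining piece $\delta A\,(v^2)_t$ is the delicate one and constitutes the main obstacle. Here $(v^2)_t=(D_tu^2)\circ X^2$, whose $L^p_TL^q$ norms are dictated by the time-weighted bounds \eqref{balanceDtu_weight} of Theorem \ref{Case1}: in particular $\|D_tu^2\|_{L^2}$ blows up like $t^{-1+(\gamma+\varepsilon)/2}$ near $t=0$, so $(v^2)_t$ is only integrable in time against a weight. To absorb this singularity I interpolate $\delta A$ between $L^2$ (via \eqref{A_t}) and $L^\infty$ (via \eqref{small_lips2} together with the $L^1_TW^{1,\infty}$ control on $v^i$ furnished by Theorem \ref{Case1}), which gives a $c(T)$-factor for $\delta A$ in an appropriate $L^p$ space whose Hölder conjugate matches the integrability of $(v^2)_t$. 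Choosing exponents so that the time weights and spatial exponents are both consistent with the constraints of Lemma \ref{w_lemma} yields $\|\delta A\,(v^2)_t\|_{L^{6/5}_TL^{3/2}}\le c(T)\|\nabla\delta v\|_{L^2_TL^2}$. Adding the two contributions and invoking Lemma \ref{w_lemma} then delivers \eqref{w_estimates}, with continuity and vanishing of $c(T)$ at $T=0$ inherited from each of the building blocks above.
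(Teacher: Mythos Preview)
Your overall plan---apply Lemma~\ref{w_lemma} with $A=A_1$ and $R=\delta A\,v^2$---is exactly the paper's strategy, and your verification of the hypotheses on $A_1$ is fine. However, two of your estimates do not go through as written.

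First, the claim $v^2\in L^\infty_TL^\infty$ is false. Since $u_0\in H^{\gamma+\varepsilon}(\mathbb{R}^2)$ with $\gamma+\varepsilon<1$, Sobolev embedding does \emph{not} place $u_0$ in $L^\infty$, so $u$ (and hence $v^2$) is unbounded near $t=0$. What is true, and what the paper uses, is the weighted bound $t^{1/2}v^2\in L^\infty_TL^\infty$, obtained via $t^{1/2}v^2\in L^\infty_TW^{1,2/(1-\gamma)}$ and \eqref{Lpv}, \eqref{balanceHg}, \eqref{balanceH2}. Paired with $\|\delta A(t)\|_{L^2}\le Ct^{1/2}\|\nabla\delta v\|_{L^2_tL^2}$ from \eqref{A_t}, this gives $\|R\|_{L^\infty_TL^2}\le C\|t^{1/2}v^2\|_{L^\infty_TL^\infty}\|\nabla\delta v\|_{L^2_TL^2}$. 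You also omit the bound on $\|g\|_{L^2_TL^2}=\|\delta A^*\!:\!\nabla v^2\|_{L^2_TL^2}$, which is needed for $\|\nabla w\|_{L^2_TL^2}$ in Lemma~\ref{w_lemma}; the paper handles it the same way, via $t^{1/2}\nabla v^2\in L^2_TL^\infty$.

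Second, your interpolation scheme for $\delta A\,v^2_t$ cannot deliver the \emph{linear} bound in $\|\nabla\delta v\|_{L^2_TL^2}$ that \eqref{w_estimates} asserts. The only $L^\infty$ control on $\delta A$ independent of $\|\nabla\delta v\|_{L^1_TL^\infty}$ is the fixed constant from \eqref{small_lips2}, so interpolating between $L^2$ and $L^\infty$ gives $\|\delta A\|_{L^p}\lesssim (t^{1/2}\|\nabla\delta v\|_{L^2_TL^2})^{2/p}$, which is sublinear unless $p=2$. But $p=2$ requires no interpolation at all: the paper simply uses spatial H\"older $L^2\times L^6\hookrightarrow L^{3/2}$ to get
\[
\|\delta A\,v^2_t\|_{L^{6/5}_TL^{3/2}}\le \|\nabla\delta v\|_{L^2_TL^2}\,\|t^{1/2}v^2_t\|_{L^{6/5}_TL^6},
\]
and then checks $t^{1/2}D_tu^2\in L^{6/5}_TL^6$ via \eqref{L4} and the time-weighted estimates \eqref{balanceH2}, \eqref{balanceDtu_weight}. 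The $t^{1/2}$ gained from \eqref{A_t} already absorbs the singularity of $v^2_t$; no interpolation is needed.
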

\begin{proof}
Using \eqref{Lpv} and the estimates in Theorem \ref{Case1}, we have that $\nabla  v \in L^{\frac65}(0,T;L^6)$, and thus there exists a constant $c$ such that if
\begin{equation*}
    \|\nabla  v^1\|_{L^1_TL^\infty}+\|\nabla  v^1\|_{L^{\frac65}_TL^6}\leq C,
\end{equation*}
then, by Lemma \ref{w_lemma} and identity \eqref{iden_div},
\begin{equation*}
\begin{aligned}
\|w\|_{L^\infty_TL^2}&\leq C\|\delta A  v^2\|_{L^\infty_TL^2},\\
\|\nabla w\|_{L^2_TL^2}&\leq C\|\delta A^*:\nabla  v^2\|_{L^2_TL^2},\\
\|w_t\|_{L^{\frac65}_TL^{\frac32}}&\leq C\|\delta A  v^2\|_{L^\infty_TL^2}+C\|(\delta A v^2)_t\|_{L^{\frac65}_TL^\frac32}.
\end{aligned}
\end{equation*}
Using H\"older's inqueality and \eqref{A_t} repeatedly, we obtain that
\begin{equation*}
    \|\delta A  v^2\|_{L^\infty_TL^2}\leq C\|t^\frac12 v^2\|_{L^\infty_TL^\infty}\|\nabla \delta  v\|_{L^2_TL^2}\leq C\|t^\frac12 v^2\|_{L^\infty_TW^{1,\frac{2}{1-\gamma}}}\|\nabla \delta  v\|_{L^2_TL^2},
\end{equation*}
\begin{equation*}
\|\delta A^*:\nabla  v^2\|_{L^2_TL^2}\leq C\|\nabla \delta  v\|_{L^2_TL^2}\|t^{\frac12}\nabla  v^2\|_{L^2_TL^\infty},
\end{equation*}
\begin{equation*}
\|\delta A_t v^2\|_{L^{\frac65}_TL^\frac32}\leq C\|\delta A_t\|_{L^2_TL^2}\| v^2\|_{L^3_TL^6}\leq C\|\nabla \delta v\|_{L^2_TL^2}\| v^2\|_{L^3_TL^6},
\end{equation*}
\begin{equation*}
    \|\delta A v^2_t\|_{L^{\frac65}_TL^\frac32}\leq \|\nabla\delta  v^2\|_{L^2_TL^2}\|t^\frac12  v^2_t\|_{L^\frac{6}{5}_TL^{6}},
\end{equation*}
so, by the regularity provided in Theorem \ref{Case1}, the proof is concluded.
\end{proof}
Now, we go back to estimate the terms $I_3$, $I_4$ in \eqref{zsplit}. H\"older's inequality and Lemma \ref{w_lemma2} provides that
\begin{equation*}
\begin{aligned}
    \int_0^T|I_3|dt&\leq C\|w_t\|_{L^{\frac65}_TL^\frac32}\|z\|_{L^6_TL^3} \leq c(T)\|\nabla \delta  v\|_{L^2_TL^2}\|z\|_{L^\infty_TL^2}^{\frac23}\|\nabla z\|_{L^2_TL^2}^{\frac13}\\
    &\leq c(T)\|\nabla \delta  v\|_{L^2_TL^2}^2+\frac14\|z\|_{L^\infty_TL^2}^{2}+\frac{\mu^m}{8}\|\nabla z\|_{L^2_TL^2}^{2}
\end{aligned}
\end{equation*}
and 
\begin{equation*}
    \int_0^T|I_4|dt\leq \frac{\mu^m}8\|\nabla z\|_{L^2_TL^2}^2+c(T)\|\nabla \delta  v\|_{L^2_TL^2}^2.
\end{equation*}
Hence, joining the estimates for $I_1$-$I_4$ and going back to \eqref{zsplit}, we obtain that for small $T>0$, 
\begin{equation}\label{auxxx}
    \sup_{t\in[0,T]}\|z\|_{L^2}^2+\int_0^T\|\nabla z \|_{L^2}^2dt\leq c(T)\int_0^T\|\nabla \delta v\|_{L^2}^2dt.
\end{equation}
Recalling that $\delta v=w+z$ and the estimate for $w$ \eqref{w_estimates}, we obtain that
\begin{equation*}
    \int_0^T\|\nabla \delta v\|_{L^2}^2dt\leq c(T)\int_0^T\|\nabla \delta v\|_{L^2}^2dt,
\end{equation*}
and hence we conclude \eqref{aux_uniq}, i.e., that for $T>0$ small enough $\|\nabla \delta v\|_{L^2_TL^2}=0$. Plugging this back into \eqref{auxxx} and \eqref{w_estimates} allow us to conclude that
\begin{equation*}
    v^1\equiv v^2 \quad \text{ on } [0,T]\times \mathbb{R}^2.
\end{equation*}
One can now go back to Eulerian coordinates, while the passage to arbitrary $T>0$ follows from standard connectivity arguments.
\qed

\vspace{0.5cm}

\subsection*{Acknowledgements}
The authors would like to gratefully thank the referees for their careful reading of
the manuscript.
FG and EGJ were partially supported by the ERC through the Starting Grant project H2020-EU.1.1.-639227. FG was partially supported by the grant EUR2020-112271 (Spain). EGJ was partially supported by the ERC Starting Grant ERC-StG-CAPA-852741. This project has received funding from the European Union’s Horizon 2020 research and innovation programme under the Marie Skłodowska-Curie grant agreement CAMINFLOW No 101031111.

\bibliographystyle{plain}
\bibliography{references9}

\begin{thebibliography}{10}

\bibitem{Abels2005}
Helmut Abels.
\newblock The initial-value problem for the {N}avier-{S}tokes equations with a
  free surface in {$L^q$}-{S}obolev spaces.
\newblock {\em Adv. Differential Equations}, 10(1):45--64, 2005.

\bibitem{Beale1983}
J.~T. Beale.
\newblock Large-time regularity of viscous surface waves.
\newblock {\em Arch. Rational Mech. Anal.}, 84(4):307--352, 1983/84.

\bibitem{Beale1981}
J.~Thomas Beale.
\newblock The initial value problem for the {N}avier-{S}tokes equations with a
  free surface.
\newblock {\em Comm. Pure Appl. Math.}, 34(3):359--392, 1981.

\bibitem{BealeNishida1985}
J.~Thomas Beale and Takaaki Nishida.
\newblock Large-time behavior of viscous surface waves.
\newblock In {\em Recent topics in nonlinear {PDE}, {II} ({S}endai, 1984)},
  volume 128 of {\em North-Holland Math. Stud.}, pages 1--14. North-Holland,
  Amsterdam, 1985.

\bibitem{BertozziConstantin1993}
A.~L. Bertozzi and P.~Constantin.
\newblock Global regularity for vortex patches.
\newblock {\em Comm. Math. Phys.}, 152(1):19--28, 1993.

\bibitem{burtea17}
Cosmin Burtea.
\newblock Optimal well-posedness for the inhomogeneous incompressible
  {N}avier-{S}tokes system with general viscosity.
\newblock {\em Anal. PDE}, 10(2):439--479, 2017.

\bibitem{CCFGG-S2019}
Angel Castro, Diego C\'{o}rdoba, Charles Fefferman, Francisco Gancedo, and
  Javier G\'{o}mez-Serrano.
\newblock Splash singularities for the free boundary {N}avier-{S}tokes
  equations.
\newblock {\em Ann. PDE}, 5(1):Art. 12, 117, 2019.

\bibitem{Chemin1993}
Jean-Yves Chemin.
\newblock Persistance de structures g\'{e}om\'{e}triques dans les fluides
  incompressibles bidimensionnels.
\newblock {\em Ann. Sci. \'{E}cole Norm. Sup. (4)}, 26(4):517--542, 1993.

\bibitem{CoutandShkoller2019}
Daniel Coutand and Steve Shkoller.
\newblock On the splash singularity for the free-surface of a {N}avier-{S}tokes
  fluid.
\newblock {\em Ann. Inst. H. Poincar\'{e} Anal. Non Lin\'{e}aire},
  36(2):475--503, 2019.

\bibitem{DanchinMucha2019}
Rapha\"{e}l Danchin and Piotr Bogus\l~aw Mucha.
\newblock The incompressible {N}avier-{S}tokes equations in vacuum.
\newblock {\em Comm. Pure Appl. Math.}, 72(7):1351--1385, 2019.

\bibitem{DanchinMucha2012}
Rapha\"{e}l Danchin and Piotr~Boguslaw Mucha.
\newblock A {L}agrangian approach for the incompressible {N}avier-{S}tokes
  equations with variable density.
\newblock {\em Comm. Pure Appl. Math.}, 65(10):1458--1480, 2012.

\bibitem{DanchinMucha2013}
Rapha\"{e}l Danchin and Piotr~Boguslaw Mucha.
\newblock Incompressible flows with piecewise constant density.
\newblock {\em Arch. Ration. Mech. Anal.}, 207(3):991--1023, 2013.

\bibitem{DanchinZhang2017}
Rapha\"{e}l Danchin and Xin Zhang.
\newblock On the persistence of {H}\"{o}lder regular patches of density for the
  inhomogeneous {N}avier-{S}tokes equations.
\newblock {\em J. \'{E}c. polytech. Math.}, 4:781--811, 2017.

\bibitem{DHMT2020}
Raphaël Danchin, Matthias Hieber, Piotr~B. Mucha, and Patrick Tolksdorf.
\newblock Free boundary problems via da prato-grisvard theory.
\newblock {\em Preprint arXiv:2011.07918}, 2020.

\bibitem{Denisova2008}
I.~V. Denisova.
\newblock Global solvability of a problem on two fluid motion without surface
  tension.
\newblock {\em Zap. Nauchn. Sem. S.-Peterburg. Otdel. Mat. Inst. Steklov.
  (POMI)}, 348(Kraevye Zadachi Matematichesko\u{\i} Fiziki i Smezhnye Voprosy
  Teorii Funktsi\u{\i}. 38):19--39, 303, 2007.

\bibitem{Denisova2001}
Irina~V. Denisova.
\newblock Evolution of a closed interface between two liquids of different
  types.
\newblock In {\em European {C}ongress of {M}athematics, {V}ol. {II}
  ({B}arcelona, 2000)}, volume 202 of {\em Progr. Math.}, pages 263--272.
  Birkh\"{a}user, Basel, 2001.

\bibitem{Desjardins97}
Beno\^{\i}t Desjardins.
\newblock Regularity results for two-dimensional flows of multiphase viscous
  fluids.
\newblock {\em Arch. Rational Mech. Anal.}, 137(2):135--158, 1997.

\bibitem{Fefferman2006}
Charles~L. Fefferman.
\newblock Existence and smoothness of the {N}avier-{S}tokes equation.
\newblock In {\em The millennium prize problems}, pages 57--67. Clay Math.
  Inst., Cambridge, MA, 2006.

\bibitem{GancedoG-J2017}
Francisco Gancedo and Eduardo Garc\'{i}a-Ju\'{a}rez.
\newblock Global regularity for 2{D} {B}oussinesq temperature patches with no
  diffusion.
\newblock {\em Ann. PDE}, 3(2):Art. 14, 34, 2017.

\bibitem{GancedoG-J2018}
Francisco Gancedo and Eduardo Garc\'{i}a-Ju\'{a}rez.
\newblock Global regularity of 2{D} density patches for inhomogeneous
  {N}avier-{S}tokes.
\newblock {\em Arch. Ration. Mech. Anal.}, 229(1):339--360, 2018.

\bibitem{GancedoG-J2021}
Francisco Gancedo and Eduardo Garc\'{i}a-Ju\'{a}rez.
\newblock {Q}uantitative {H}\"older estimates of even singular integral
  operators on patches.
\newblock {\em Preprint arXiv:2109.08762}, 2021.

\bibitem{GuoTice2013}
Yan Guo and Ian Tice.
\newblock Almost exponential decay of periodic viscous surface waves without
  surface tension.
\newblock {\em Arch. Ration. Mech. Anal.}, 207(2):459--531, 2013.

\bibitem{GuoTice2013-2}
Yan Guo and Ian Tice.
\newblock Decay of viscous surface waves without surface tension in
  horizontally infinite domains.
\newblock {\em Anal. PDE}, 6(6):1429--1533, 2013.

\bibitem{HuangPaicu2014}
J.~Huang and Marius Paicu.
\newblock Decay estimates of global solution to 2{D} incompressible
  {N}avier-{S}tokes equations with variable viscosity.
\newblock {\em Discrete Contin. Dyn. Syst.}, 34(11):4647--4669, 2014.

\bibitem{HPaicuZ2013}
Jingchi Huang, Marius Paicu, and Ping Zhang.
\newblock Global well-posedness of incompressible inhomogeneous fluid systems
  with bounded density or non-{L}ipschitz velocity.
\newblock {\em Arch. Ration. Mech. Anal.}, 209(2):631--682, 2013.

\bibitem{KRYZ2016}
Alexander Kiselev, Lenya Ryzhik, Yao Yao, and Andrej Zlato\v{s}.
\newblock Finite time singularity for the modified {SQG} patch equation.
\newblock {\em Ann. of Math. (2)}, 184(3):909--948, 2016.

\bibitem{LadyzenskajaSolonnikov1975}
O.~A. Lady\v{z}enskaja and V.~A. Solonnikov.
\newblock The unique solvability of an initial-boundary value problem for
  viscous incompressible inhomogeneous fluids.
\newblock {\em Zap. Nau\v{c}n. Sem. Leningrad. Otdel. Mat. Inst. Steklov.
  (LOMI)}, 52:52--109, 218--219, 1975.
\newblock Boundary value problems of mathematical physics, and related
  questions of the theory of functions, 8.

\bibitem{LiaoLiu2016}
Xian Liao and Yanlin Liu.
\newblock Global regularity of three-dimensional density patches for
  inhomogeneous incompressible viscous flow.
\newblock {\em Sci. China Math.}, 62(9):1749--1764, 2019.

\bibitem{LiaoZhang2016}
Xian Liao and Ping Zhang.
\newblock On the global regularity of the two-dimensional density patch for
  inhomogeneous incompressible viscous flow.
\newblock {\em Arch. Ration. Mech. Anal.}, 220(3):937--981, 2016.

\bibitem{LiaoZhang2016Pre}
Xian Liao and Ping Zhang.
\newblock Global regularity of 2{D} density patches for viscous inhomogeneous
  incompressible flow with general density: low regularity case.
\newblock {\em Comm. Pure Appl. Math.}, 72(4):835--884, 2019.

\bibitem{Lions1996}
Pierre-Louis Lions.
\newblock {\em Mathematical topics in fluid mechanics. {V}ol. 1}.
\newblock Oxford University Press, 1996.

\bibitem{NishidaTeramotoYoshihara2004}
Takaaki Nishida, Yoshiaki Teramoto, and Hideaki Yoshihara.
\newblock Global in time behavior of viscous surface waves: horizontally
  periodic motion.
\newblock {\em J. Math. Kyoto Univ.}, 44(2):271--323, 2004.

\bibitem{PaicuZhang2020}
Marius Paicu and Ping Zhang.
\newblock Striated regularity of 2-{D} inhomogeneous incompressible
  {N}avier-{S}tokes system with variable viscosity.
\newblock {\em Comm. Math. Phys.}, 376(1):385--439, 2020.

\bibitem{PaicuZZ2013}
Marius Paicu, Ping Zhang, and Zhifei Zhang.
\newblock Global unique solvability of inhomogeneous {N}avier-{S}tokes
  equations with bounded density.
\newblock {\em Comm. Partial Differential Equations}, 38(7):1208--1234, 2013.

\bibitem{SSZ2020}
Hirokazu Saito, Yoshihiro Shibata, and Xin Zhang.
\newblock Some free boundary problem for two-phase inhomogeneous incompressible
  flows.
\newblock {\em SIAM J. Math. Anal.}, 52(4):3397--3443, 2020.

\bibitem{Simon1990}
Jacques Simon.
\newblock Nonhomogeneous viscous incompressible fluids: existence of velocity,
  density, and pressure.
\newblock {\em SIAM J. Math. Anal.}, 21(5):1093--1117, 1990.

\bibitem{Solonnikov1977}
V.~A. Solonnikov.
\newblock Solvability of the problem of the motion of a viscous incompressible
  fluid that is bounded by a free surface.
\newblock {\em Izv. Akad. Nauk SSSR Ser. Mat.}, 41(6):1388--1424, 1448, 1977.

\bibitem{Solonnikov1990}
V.~A. Solonnikov.
\newblock An initial-boundary value problem for a {S}tokes system that arises
  in the study of a problem with a free boundary.
\newblock {\em Trudy Mat. Inst. Steklov.}, 188:150--188, 192, 1990.
\newblock Translated in Proc. Steklov Inst. Math. {{\bf{1}}991}, no. 3,
  191--239, Boundary value problems of mathematical physics, 14 (Russian).

\bibitem{Solonnikov1991}
V.~A. Solonnikov.
\newblock Solvability of a problem on the evolution of a viscous incompressible
  fluid, bounded by a free surface, on a finite time interval.
\newblock {\em Algebra i Analiz}, 3(1):222--257, 1991.

\bibitem{Sylvester1990}
Donna Lynn~Gates Sylvester.
\newblock Large time existence of small viscous surface waves without surface
  tension.
\newblock {\em Comm. Partial Differential Equations}, 15(6):823--903, 1990.

\bibitem{TaniTanaka1995}
Atusi Tani and Naoto Tanaka.
\newblock Large-time existence of surface waves in incompressible viscous
  fluids with or without surface tension.
\newblock {\em Arch. Rational Mech. Anal.}, 130(4):303--314, 1995.

\bibitem{WTK2014}
Yanjin Wang, Ian Tice, and Chanwoo Kim.
\newblock The viscous surface-internal wave problem: global well-posedness and
  decay.
\newblock {\em Arch. Ration. Mech. Anal.}, 212(1):1--92, 2014.

\end{thebibliography}

\end{document}